\documentclass[letterpaper,10pt]{amsart}

\usepackage{amsmath}
\usepackage{amssymb}
\usepackage[T1]{fontenc}
\usepackage[utf8]{inputenc}
\usepackage[english]{babel}
\usepackage{hyperref}
\usepackage{tikz}
\usepackage{xspace}
\usepackage{xcolor}

\theoremstyle{definition}
\newtheorem{defn}{Definition}[section]

\newtheorem{question}[defn]{Question}
\newtheorem{remark}[defn]{Remark}
\theoremstyle{plain}
\newtheorem{lemma}[defn]{Lemma}
\newtheorem{prop}[defn]{Proposition}
\newtheorem{thm}[defn]{Theorem}

\newcommand{\diam}{\operatorname{diam}}
\newcommand{\U}{\mathcal{U}}
\newcommand{\tame}{\mathrm{tame}}

\newcommand{\Homeo}{\mathrm{Homeo}}
\newcommand{\Wazewski}{Wa\.{z}ewski\xspace}
\newcommand{\End}{\mathrm{End}}
\newcommand{\Id}{\mathrm{Id}}
\newcommand{\osc}{\mathrm{osc}}
\DeclareMathOperator{\rk}{\mathrm{rk}}


\begin{document}
	
\title[Some examples of tame dynamical systems]{Some examples of tame dynamical systems answering questions of Glasner and Megrelishvili}
\author{Alessandro Codenotti}
\address{A. Codenotti, Institut f\"{u}r Mathematische Logik und Grundlagenforschung, Universit\"{a}t M\"{u}nster, Einsteinstrasse 62, 48149 M\"{u}nster, Germany}
\email{acodenot@uni-muenster.de}

\thanks{Funded by the Deutsche Forschungsgemeinschaft (DFG, German Research Foundation) under Germany’s Excellence Strategy EXC 2044–390685587, Mathematics Münster: Dynamics– Geometry–Structure and by CRC 1442 Geometry: Deformations and Rigidity.}
\subjclass[2020]{37E25, 37B05, 54F50}

\keywords{tame dynamical systems, dendrites, $\beta$-rank}

\begin{abstract}
	Glasner and Megrelishvili proved that every continuous action of a topological group $G$ on a dendrite $X$ is tame. We produce two examples of an action on a dendrite which is not $\tame_1$, answering a question they raised. We then show that actions on dendrites have $\beta$-rank at most $2$ and produce examples of tame metric dynamical systems of $\beta$-rank $\alpha$ for any $\alpha<\omega_1$, answering another question of Glasner and Megrelishvili. 
\end{abstract}

\maketitle

\section{Introduction}

A \emph{dynamical system} is a continuous action $\alpha\colon G\times X\to X$ of a topological group $G$ on a compact space $X$. We will usually suppress the action from the notation and simply refer to $(X,G)$ or $G\curvearrowright X$ as a dynamical system. Given a dynamical system $(X,G)$ with a faithful action $G\curvearrowright X$, as will always be the case in this paper, we can identify $G$ with a subset of $X^X$, through the map $g\mapsto(x\mapsto gx)$. The \emph{Ellis semigroup} of $(X,G)$, denoted by $E(X,G)$, is the closure of $G$ in $X^X$, where the latter is equipped with the pointwise convergence topology. It is a compact right topological semigroup whose algebraic properties encode dynamical properties of the system $(X,G)$. See \cite{auslander} for more information about this object.

Tame dynamical systems were introduced by K\"{o}hler \cite{Kohler} under the name of \emph{regular} systems and have been studied by various authors \cite{KerrLi}, \cite{Glasner1}, \cite{Glasner2},\cite{Glasner3}, \cite{Huang}, \cite{GMtrees}, see \cite{Glasner4} for an overview and more references. There are various characterizations of tame dynamical systems in the aforementioned papers, but the one we will use is that a metric dynamical system $(X,G)$ is \emph{tame} if and only if the Ellis semigroup $E(X,G)$ is a Rosenthal compactum.
Recall that a function $f\colon X\to Y$ between Polish spaces is a \emph{Baire class 1 function} if $f^{-1}(U)$ is $F_\sigma$ in $X$ for every open $U\subseteq Y$. A topological space $K$ is called a \emph{Rosenthal compactum} if it is homeomorphic to a compact subspace of $B_1(X,\mathbb R)$, the space of Baire class 1 functions $X\to\mathbb R$, for some Polish space $X$.
We will also make use of the equivalent characterization proved in \cite[Theorem 6.3]{GlasnerMegrelishviliUspenskij}, namely that a metric dynamical system $(X,G)$ is tame if and only if every function in its Ellis semigroup is a Baire class 1 function $X\to X$.

Recently Glasner and Megrelishvili introduced in \cite{GM} two measures of complexity of a tame dynamical system. The first, based on a Trichotomy of Todor\v{c}evi\'{c} \cite{TodorcevicTrichotomy} for Rosenthal compacta, is a division of tame systems into three subclasses as follows.

\begin{defn} Let $G$ be a topological group and $G\curvearrowright X$ a $G$-flow. We say that this dynamical system is:
	\begin{itemize}
        \item $\tame$ if the Ellis semigroup $E(X,G)$ is a Rosenthal compactum.
		\item $\tame_1$ if the Ellis semigroup $E(X,G)$ is a first countable Rosenthal compactum.
		\item $\tame_2$ if the Ellis semigroup $E(X,G)$ is an hereditarily separable Rosenthal compactum.
	\end{itemize}
\end{defn}

Every $\tame_2$ system is $\tame_1$ and every $\tame_1$ system is $\tame$, but it was proved in \cite{GM} that those inclusions are strict.

A large class of examples of tame dynamical systems is provided by \cite{GMtrees}, in which Glasner and Megrelishvili showed that every dynamical system $(X,G)$ where $X$ is a dendrite is tame. It is natural to ask whether such a dynamical system must always be $\tame_1$ and indeed this was raised as Question 12.2 in \cite{GM}. In the same paper the authors showed that the answer is positive for the simplest dendrite $X=[0,1]$ and more generally for all linearly ordered metric spaces (on a dendrite $X$, modulo fixing an arbitrary root $r\in X$, there is a natural partial order which is closed in $X^2$, namely the order $x\leq y$ if and only if $x$ belongs to the arc from $r$ to $y$). In Section \ref{section: Wazewski dendrites have actions that are not tame_1} we give a negative answer to this question, in particular we show in Theorem \ref{thm: Wazewski is not tame1} that the action $\Homeo(W_P)\curvearrowright W_P$, where $W_P$ is a \Wazewski dendrite, is not $\tame_1$.
A dynamical system $(X,G)$ is called \emph{rigid} if there is a net $(p_\lambda)_{\lambda\in\Lambda}$ in $E(X,G)\setminus G$ such that $p_\lambda\to\mathrm{Id}_X$. We verify that the action $\Homeo(W_P)\curvearrowright W_P$ is rigid in Lemma \ref{lemma: Wazewski is rigid}, which gives an example of a tame, rigid, metric system which is not hereditarily nonsensitive as well as an example of a tame, rigid, metric, minimal system which is not equicontinuous. This answers positively question 5.6 of \cite{GM}, which asks whether such examples exist. In Section \ref{section: the second example} we construct another example of a dendrite $X$ such that the action $\Homeo(X)\curvearrowright X$ is not $\tame_1$, as far as we know this space has not appeared in the literature before.

The second measure of complexity for metric tame dynamical systems is based on the theory of ordinal ranks for Baire class 1 functions developed by Kechris and Louveau \cite{KechrisLouveau}. It is a countable ordinal associated to the dynamical system $(X,G)$, called its $\beta$-rank, denoted $\beta(X,G)$. Glasner and Megrelishvili pointed out that all the known examples of metric tame dynamical systems $(X,G)$ satisfy either $\beta(X,G)=1$ or $\beta(X,G)=2$ and asked (Question 11.8 of \cite{GM}) whether, for every $\alpha<\omega_1$, there is a metric tame dynamical system $(X_\alpha,G_\alpha)$ with $\beta(X_\alpha,G_\alpha)=\alpha$. In Section \ref{section: actions on dendrites have low beta-rank} we prove that there are no such examples among actions on dendrites, in particular we prove in Theorem \ref{thm: actions on dendrites have low beta-rank} that if $(X,G)$ is a dynamical system with $X$ a dendrite, then $\beta(X,G)\leq 2$. In Section \ref{section: systems with arbitrary beta-rank} we construct, for every $\alpha<\omega_1$, a metric tame dynamical system with $\beta$-rank exactly $\alpha$, thus giving a positive answer to the aforementioned question. 

We conclude with a few open questions and comments in Section \ref{section: open questions and conclusion}.

\section{\Wazewski dendrites have actions that are not \texorpdfstring{$\tame_1$}{tame 1}}
\label{section: Wazewski dendrites have actions that are not tame_1}

Glasner and Megrelishvili proved in \cite{GMtrees} that whenever $X$ is a dendrite and $G$ is a topological group acting continuously on $X$, the action is tame. The question of whether such an action is always $\tame_1$ was raised as Question 12.2 in \cite{GM}. We answer it negatively with an argument based on the following result of Ellis from \cite{Ellis}. The same approach has been already successfully applied in \cite{GM} to prove that the dynamical system $\Homeo_+(S^1)\curvearrowright
 S^1$ is tame but not $\tame_1$. 

\begin{prop}\label{prop: Ellis}
	Let $(X,G)$ be a proximal minimal system. Suppose that there is $a\in X$ and an uncountable set $\{b_i\}_{i\in I}\subseteq X\setminus\{a\}$ such that for every $i\in I$ the function $p_{a,b_i}$ defined by $$p_{a,b_i}(x)=\begin{cases}
	b_i & x=b_i \\ a & \text{otherwise}\end{cases}$$
	is an idempotent in $E(X,G)$. Then the idempotent $p_a$ with constant value $a$ is in $E(X,G)$ and it doesn't admit a countable neighbourhood basis. In particular $E(X,G)$ is not first countable.
\end{prop}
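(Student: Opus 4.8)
The plan is to prove the two assertions separately: first that $p_a \in E(X,G)$, and then that $p_a$ has no countable neighbourhood basis in $E(X,G)$, which immediately yields that $E(X,G)$ is not first countable. Throughout I work in the pointwise-convergence topology, recalling that a basic open neighbourhood of a function $f$ in $X^X$ has the form $\{h : h(x_k) \in U_k \text{ for } k=1,\dots,n\}$ determined by finitely many points $x_1,\dots,x_n$ and open sets $U_k \ni f(x_k)$, and that $E(X,G)$ carries the subspace topology. Since the set $\{b_i\}_{i\in I}$ is uncountable it has uncountably many distinct values, a fact I use repeatedly.

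To see that $p_a \in E(X,G)$, I would show that $p_a$ lies in the closure of $\{p_{a,b_i}\}_{i \in I}$; as $E(X,G)$ is closed in $X^X$, this suffices. Fix a basic neighbourhood $\{h : h(x_k) \in U_k,\ k=1,\dots,n\}$ of $p_a$, so each $U_k$ contains $a = p_a(x_k)$. Because only the finitely many coordinates $x_1,\dots,x_n$ are constrained while there are uncountably many distinct $b_i$, I can choose $i$ with $b_i \notin \{x_1,\dots,x_n\}$. Then $p_{a,b_i}(x_k) = a \in U_k$ for every $k$, so $p_{a,b_i}$ lies in the given neighbourhood. Hence every neighbourhood of $p_a$ meets $\{p_{a,b_i}\}$, giving $p_a \in E(X,G)$; note that $p_a$ is indeed idempotent, since $p_a \circ p_a = p_a$.

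For the second assertion I would argue by contradiction. Suppose $p_a$ has a countable neighbourhood basis; shrinking each member to a basic open set, I may assume it consists of sets $V_n = \{h \in E(X,G) : h(x) \in U^n_x \text{ for } x \in F_n\}$ with $F_n$ finite and $a \in U^n_x$ for each $x \in F_n$. Let $F = \bigcup_n F_n$, a countable set. As uncountably many distinct $b_i$ exist, choose $b_{i_0} \notin F$, and using that $X$ is Hausdorff pick an open $U \ni a$ with $b_{i_0} \notin U$. Then $N = \{h \in E(X,G) : h(b_{i_0}) \in U\}$ is a neighbourhood of $p_a$, and I claim no $V_n$ is contained in $N$. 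The idempotent $p_{a,b_{i_0}}$ belongs to $E(X,G)$ by hypothesis and satisfies $p_{a,b_{i_0}}(x) = a \in U^n_x$ for all $x \in F_n$ (because $b_{i_0} \notin F \supseteq F_n$), so $p_{a,b_{i_0}} \in V_n$; yet $p_{a,b_{i_0}}(b_{i_0}) = b_{i_0} \notin U$, so $p_{a,b_{i_0}} \notin N$. Thus $V_n \not\subseteq N$ for every $n$, contradicting the basis property.

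The point requiring care — and the only place the hypothesis is genuinely needed — is that all relevant neighbourhoods are taken inside the subspace $E(X,G)$, so to defeat a candidate basis I must exhibit a witness that actually lies in $E(X,G)$; the given functions $p_{a,b_{i_0}}$ serve exactly this purpose, which is precisely why their membership in the Ellis semigroup is the crucial assumption. The proximality and minimality of $(X,G)$ play no direct role in this implication: they are the ingredients that, via Ellis's theorem, guarantee the existence of the idempotents $p_{a,b_i}$ in the first place. Once it is established that $p_a$ admits no countable neighbourhood basis, $E(X,G)$ cannot be first countable, completing the argument.
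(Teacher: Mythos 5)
Your proof is correct. The paper itself gives no argument for this proposition --- it is quoted from Ellis's paper on enveloping semigroups of projective flows --- so there is nothing to compare against, but your two steps (approximating $p_a$ by the $p_{a,b_i}$ with $b_i$ avoiding the finitely many constrained coordinates, and defeating any candidate countable basis at $p_a$ by choosing $b_{i_0}$ outside the countable union of the finite supports) are exactly the standard way to establish both claims, and you handle the one delicate point correctly: the witnesses $p_{a,b_{i_0}}$ must lie in $E(X,G)$ since the neighbourhoods are taken in the subspace topology. Your side remark is also accurate: proximality and minimality are not used in this implication; in Ellis's original setting they are what produce the idempotents $p_{a,b_i}$, which here are simply assumed.
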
 

\begin{defn}
     A \emph{dendrite} $X$ is a compact, connected, locally connected, metrizable topological space, such that for every $x,y\in X$, there is a unique arc in $X$ with endpoints $x$ and $y$.
\end{defn}

For more equivalent characterizations of dendrites and results about them we refer to \cite[Chapter 10]{Nadler}.
 
We will show that for any $P\subseteq\{3,4,\ldots,\omega\}$ the natural action $\Homeo(W_P)\curvearrowright W_P$ on the \Wazewski dendrite $W_P$ (defined below) is not $\tame_1$ by showing that the hypothesis of Proposition \ref{prop: Ellis} are satisfied. We will make repeated use of the following result of Charatonik and Dilks \cite{CD}.

\begin{prop}\label{prop: characterization of W_P}
	Fix $P\subseteq\{3,4,\ldots,\omega\}$. If $X$ and $Y$ are two dendrites such that:
	\begin{itemize}
	    \item for every $p\in P$ the set of ramification points of order $p$ is arcwise dense in both $X$ and $Y$,
	    \item every ramification point of both $X$ and $Y$ has order in $P$,
	\end{itemize}
	then $X\cong Y$. In particular the \Wazewski dendrite $W_P$ is characterized by the two properties above. Suppose moreover that we have two fixed countable dense sets $E=\{e_1,e_2,\ldots\}\subseteq\End(X)$ and $F=\{f_1,f_2,\ldots\}\subseteq\End(Y)$. Then the homeomorphism $\varphi\colon X\to Y$ can be chosen to satisfy $\varphi(e_1)=f_1,\varphi(e_2)=f_2$ and $\varphi(E)=F$.
\end{prop}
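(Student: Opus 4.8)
The plan is to prove the uniqueness statement by a back-and-forth construction, realizing $\varphi$ as the limit of an increasing chain of homeomorphisms between finite subtrees of $X$ and $Y$; the ``in particular'' and the endpoint-matching addendum then drop out of the same construction. Throughout I use the structure theory of dendrites from \cite[Chapter 10]{Nadler}: for a finite set $S\subseteq X$ its convex hull (the union of the arcs between its points) is a \emph{finite subtree}, a subdendrite homeomorphic to a finite topological tree; any point at which three arcs of such a subtree meet is a ramification point of $X$, hence has order in $P$ by the second hypothesis; and there is a continuous first-point retraction $r_S\colon X\to S$ sending $x$ to the unique point where the arc from $x$ to $S$ first meets $S$.

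The objects of the back-and-forth are pairs of finite subtrees $A\subseteq X$, $B\subseteq Y$ together with a homeomorphism $\psi\colon A\to B$ that is \emph{admissible}: it carries vertices to vertices and edges to edges, sends each leaf of $A$ to a leaf of $B$ and each endpoint of $X$ among those leaves to an endpoint of $Y$, and sends every branch vertex of $A$—necessarily a ramification point of $X$ of some order $p\in P$—to a ramification point of $Y$ of the \emph{same} order $p$. The engine is an Extension Lemma: given an admissible $\psi\colon A\to B$ and an arbitrary point $x\in X\setminus A$, one can enlarge $A$ to an admissible $A'\ni x$ and extend $\psi$ to an admissible $\psi'\colon A'\to B'$ (and symmetrically with the roles of $X$ and $Y$ reversed). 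To prove it, set $z=r_A(x)$ and distinguish two cases. If $z$ lies in the interior of an edge of $A$, then $z$ is a ramification point of $X$ of some order $p\in P$; one subdivides that edge at $z$, grows the new arc from $z$ to $x$ into a free direction at $z$, and mirrors this in $Y$ by first locating—using the \emph{arcwise density of ramification points of order $p$}, valid because $p\in P$—a ramification point $z'$ of order exactly $p$ in the interior of the corresponding edge of $B$, subdividing there and growing a matching branch. If instead $z$ is already a branch vertex of $A$, then $\psi(z)$ is a ramification point of $Y$ of the same order $p$, the number of occupied directions agrees on both sides by admissibility, and the arc from $z$ to $x$ witnesses a free direction at $z$ corresponding to a free direction at $\psi(z)$, along which the matching branch is grown. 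In either case one extends the new branch out to an endpoint of the dendrite, and when the addendum requires it, chooses that endpoint inside the prescribed dense set $E$ or $F$; this is possible because $E$ (resp.\ $F$) is dense in $\End(X)$ (resp.\ $\End(Y)$) and the endpoints lying in a chosen sub-branch form a nonempty relatively open subset of $\End(X)$ (resp.\ $\End(Y)$).

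With the Extension Lemma in hand I would run the standard dovetailing. Start from $A_0=[e_1,e_2]$, $B_0=[f_1,f_2]$, and any arc-homeomorphism $\psi_0$ with $\psi_0(e_1)=f_1$ and $\psi_0(e_2)=f_2$, installing the two pinned values. At stage $n$ I apply the Extension Lemma finitely many times to meet, in turn, the requirements: (i) the $n$-th point of a fixed countable dense subset of $X$ lies in $A_n$; (ii) the $n$-th point of a fixed countable dense subset of $Y$ lies in $B_n$; (iii) every component of $X\setminus A_n$ has diameter $<1/n$ and every component of $Y\setminus B_n$ has diameter $<1/n$, achievable because a compact locally connected dendrite can be cut by a finite subtree into pieces of arbitrarily small diameter; and (iv) $e_n$ enters $A_n$ with $\psi(e_n)\in F$ and $f_n$ enters $B_n$ with its $\psi$-preimage in $E$. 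Requirements (i) and (ii) force $S_\infty=\bigcup_n A_n$ to be dense in $X$ and $T_\infty=\bigcup_n B_n$ dense in $Y$; requirement (iv) arranges that $\psi_\infty=\bigcup_n\psi_n$ restricts to a bijection $E\to F$; and since $\psi_{n+1}$ extends $\psi_n$, the map $\psi_\infty\colon S_\infty\to T_\infty$ is a well-defined bijection.

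Finally I would check that $\psi_\infty$ extends to the desired homeomorphism. The mesh control (iii) makes $r_{A_n}\to\Id_X$ and $r_{B_n}\to\Id_Y$ uniformly, so for $x\in X$ the sequence $\psi_n(r_{A_n}(x))$ is Cauchy and its limit defines a continuous map $\varphi$ extending $\psi_\infty$; running the same argument on $\psi_\infty^{-1}$ produces a continuous inverse, so $\varphi$ is a homeomorphism with $\varphi(e_1)=f_1$, $\varphi(e_2)=f_2$, and $\varphi(E)=F$. The bare uniqueness statement $X\cong Y$ follows from the same construction with requirement (iv) dropped, starting from the arc spanned by any two endpoints, and the ``in particular'' follows once one notes that $W_P$ satisfies the two listed properties by its very construction. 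I expect the main obstacle to be the Extension Lemma, and within it the order bookkeeping: the entire argument hinges on never being forced to match a ramification point to one of the wrong order, which is exactly what the combination of ``every ramification point has order in $P$'' and ``order-$p$ ramification points are arcwise dense for each $p\in P$'' is designed to guarantee.
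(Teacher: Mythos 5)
The paper itself gives no proof of this proposition---it is quoted from Charatonik and Dilks \cite{CD}---but it reproduces precisely this style of back-and-forth in its proof of Lemma \ref{lemma: pairs of alternating endpoints}, so your overall strategy is the intended one, and you correctly isolate the crux: arcwise density of each order class is what guarantees you are never forced to match a ramification point with one of the wrong order. There is, however, a genuine gap in the passage to the limit. Your admissibility condition constrains $\psi$ only at the branch vertices; on the interior of an edge $\psi$ is an arbitrary arc homeomorphism, so for $z=r_A(x)$ interior to an edge the point $\psi(z)$ need not be a ramification point at all, let alone one of order $p$. Your Extension Lemma therefore attaches the new branch in $Y$ at a point $z'$ located \emph{afresh} by arcwise density, with $z'\neq\psi(z)$ in general; hence $\psi'(z)\neq\psi(z)$, the maps $\psi_n$ do \emph{not} form an increasing chain, and the step ``since $\psi_{n+1}$ extends $\psi_n$, the map $\psi_\infty=\bigcup_n\psi_n$ is a well-defined bijection'' fails as written. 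A related weakness is the Cauchy estimate for $\psi_n(r_{A_n}(x))$: it requires the images of edges and of newly grown branches to become small, and smallness of the components of $Y\setminus B_n$ does not bound the diameter of an edge of $B_n$ or of a branch attached at stage $n+1$ before that stage's mesh refinement.

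Both problems are repaired by strengthening admissibility so that each edge homeomorphism sends \emph{every} ramification point in the interior of the edge to one of the same order on the other side. Such arc homeomorphisms exist: the ramification points of a dendrite form a countable set, each order class in $P$ meets every nondegenerate subarc in a dense set by hypothesis, and a colored back-and-forth on two countable dense subsets of $(0,1)$ partitioned into dense color classes yields an order isomorphism extending to a homeomorphism of arcs. This is exactly the ``trivial observation'' the paper invokes at the start of Lemma \ref{lemma: pairs of alternating endpoints}. With it, the attachment point in $Y$ is always $\psi(z)$ itself, the chain is genuinely increasing, and the limit can be taken either by your uniform-convergence argument (now with edge diameters controlled on both sides) or, as the paper does, by appealing to the fact that an order-preserving bijection of ramification points defined on a dense subtree extends to a homeomorphism of the dendrites. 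Finally, to get $\varphi(E)=F$ you should also arrange that elements of $E$ entering $A_\infty$ through mesh-control steps, not only through your step (iv), are sent into $F$; this is easily done by always terminating new branches at points of $E$ and $F$ respectively.
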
 

In order to apply Proposition \ref{prop: Ellis} we need to check that $\Homeo(W_P)\curvearrowright W_P$ is a proximal system. This follows at once from \cite[Theorem 10.1]{duchesne2018group}, but it can also be proved directly, which we do in the following lemma. First we introduce some notation. Given $x\in X$, where $X$ is a dendrite, we denote by $\widehat{x}$ the set of connected components of $X\setminus \{x\}$. We say that $x$ is an endpoint if $|\widehat{x}|=1$, a regular point if $|\hat{x}|=2$ and a ramification point if $|\widehat{x}|\geq 3$. Given $x\neq y\in X$ there is a unique $C\in\widehat{x}$ such that $y\in C$. We denote $C\cup\{x\}$ by $C_x(y)$ and we write $C_{x,y}$ for $C_x(y)\cap C_y(x)$. Note that, by Proposition \ref{prop: characterization of W_P}, if $X=W_P$, then $C_x(y)$ and $C_{x,y}$ are homeomorphic to $X$ for all $x\neq y\in X$. We denote by $[x,y]$ the unique arc between $x$ and $y$. We say that the arc $[x,y]$ in a dendrite $X$ is \emph{free} if $(x,y)=[x,y]\setminus\{x,y\}$ is open in $X$. A free arc $[x,y]$ is called \emph{maximal} if there is no free arc $[v,w]$ with $[x,y]\subsetneq[v,w]$.

\begin{lemma}
	Fix $P\subseteq\{3,4,\ldots,\omega\}$. The dynamical system $\Homeo(W_P)\curvearrowright W_P$ is proximal.
\end{lemma}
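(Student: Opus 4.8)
The plan is to reduce proximality to a purely geometric statement and then exploit the self-similarity of $W_P$. Recall that $(X,G)$ is proximal exactly when for every $x\neq y$ there is $p\in E(X,G)$ with $p(x)=p(y)$. I would first observe that, by compactness of $X^X$ and continuity of the metric, it suffices to prove the weaker-looking statement that for all $x\neq y\in W_P$ and all $\varepsilon>0$ there is a single $g\in\Homeo(W_P)$ with $d(gx,gy)<\varepsilon$: taking such a $g_n$ for $\varepsilon=1/n$, a subnet of $(g_n)$ converges in $X^X$ to some $p\in E(W_P,\Homeo(W_P))$, and since $d(g_nx,g_ny)\to 0$ we get $p(x)=p(y)$ along that subnet.

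The heart of the argument is then to build such a $g$ by collapsing $x$ and $y$ into a common subdendrite of small diameter. Two ingredients drive this. First, the self-similarity already recorded in the excerpt: every set $C_u(v)\cong W_P$, and $u$ is an endpoint of the branch $C_u(v)$. Second, a shrinking lemma: $W_P$ contains branches $C_w(v)\cong W_P$ of arbitrarily small diameter. I would prove the latter by fixing an endpoint $e$ and using arcwise density of ramification points to pick ramification points $t_1,t_2,\dots$ on the arc towards $e$ with $t_n\to e$; the branches $C_{t_n}(e)$ are nested with $\bigcap_n C_{t_n}(e)=\{e\}$, so $\diam C_{t_n}(e)\to 0$, while each is $\cong W_P$.

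With these in hand I would position everything around the pair $\{x,y\}$ as follows. Pick a ramification point $\rho$ in the open arc $(x,y)$ and a component of $W_P\setminus\{\rho\}$ meeting neither $x$ nor $y$ (it exists since $\rho$ has order at least $3$); inside that third branch, which is $\cong W_P$, run the shrinking construction to obtain a ramification point $w$ together with a small branch $D:=C_w(e)$ of diameter $<\varepsilon$. Because $w$ lies beyond $\rho$ from the point of view of both $x$ and $y$, the arcs $[w,x]$ and $[w,y]$ pass through $\rho$, so $x$ and $y$ lie in the \emph{same} component of $W_P\setminus\{w\}$, giving a branch $D':=C_w(\rho)$ distinct from $D$ and containing both points. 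Now define $g$ to fix $w$, to swap $D$ and $D'$ via homeomorphisms $D'\to D$ and $D\to D'$ fixing $w$ (these exist since $D,D'\cong W_P$ and $w$ is an endpoint of each, by the endpoint-prescribing part of Proposition \ref{prop: characterization of W_P}), and to be the identity on every other component of $W_P\setminus\{w\}$. This $g$ is a bijection fixing $w$, the pieces agree at $w$, so $g$ is a continuous bijection of a compact Hausdorff space and hence a homeomorphism of $W_P$; by construction $gx,gy\in D$, whence $d(gx,gy)\le\diam D<\varepsilon$.

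The step I expect to require the most care is the shrinking lemma together with the verification that the swap map is genuinely continuous at $w$ when $w$ has infinite order, so that $W_P\setminus\{w\}$ has infinitely many components: one must check that any sequence approaching $w$ is carried to a sequence approaching $w$, which holds because $g$ differs from the identity only on the two branches $D$ and $D'$ and both gluing homeomorphisms send $w$ to $w$. Everything else is bookkeeping with the order structure of ramification points and repeated appeals to Proposition \ref{prop: characterization of W_P} and to the self-similarity of $W_P$.
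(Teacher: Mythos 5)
Your proposal is correct and follows essentially the same strategy as the paper: collapse $x$ and $y$ by swapping a branch containing both of them with a branch of diameter less than $\varepsilon$ homeomorphic to $W_P$, using the Charatonik--Dilks characterization (Proposition \ref{prop: characterization of W_P}) to produce the gluing homeomorphisms fixing the cut point. The only real difference is that the paper cuts at a \emph{regular} point $c$ with $C_c(x)=C_c(y)$ and the other component of diameter less than $\varepsilon$, so there are exactly two branches to swap and continuity follows at once from a two-element closed cover, whereas your cut at a ramification point $w$ forces the extra (correct, but avoidable) verification of continuity at $w$ when $W_P\setminus\{w\}$ has infinitely many components.
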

\begin{proof}
	Let $x\neq y\in W_P$ and $\varepsilon>0$. We want to find $h\in\Homeo(W_P)$ such that $d(h(x),h(y))<\varepsilon$. We can find a regular point $c\in W_P$ such that $C_c(x)=C_c(y)$ and the diameter of the only other (because $c$ is regular) component $C'$ of $W_P\setminus\{c\}$ is smaller than $\varepsilon$. Let $C=C'\cup\{c\}$. By the characterization in Proposition \ref{prop: characterization of W_P} we have $C\cong C_c(x)\cong W_P$ and since $c$ is an endpoint of both components we can find homeomorphisms $\varphi_1\colon C\to C_c(x)$ and $\varphi_2\colon C_c(x)\to C$ both of which map $c$ to itself. Now $h=\varphi_1\cup\varphi_2$, meaning that $h(z)=\varphi_1(z)$ if $z$ is in the domain of $\varphi_1$, and $h(z)=\varphi_2(z)$ if $z$ is in the domain of $\varphi_2$, is the desired homeomorphism. Indeed $h$ is well defined since the domains of $\varphi_1$ and $\varphi_2$ only intersect in $\{c\}$ and we have $\varphi_1(c)=c=\varphi_2(c)$, $h$ is defined everywhere since $W_P=C\cup C_c(x)$, it is continuous since it is continuous when restricted to the finite closed cover $\{C,C_c(x)\}$ (as it agrees with $\varphi_1$ on the former and with $\varphi_2$ on the latter), it is injective since $\varphi_1$ and $\varphi_2$ are, and it is surjective since the images of $\varphi_1$ and $\varphi_2$ cover $W_P$. 
\end{proof}

We can now prove that the dynamical system $\Homeo(W_P)\curvearrowright W_P$ satisfies the assumptions of Proposition \ref{prop: Ellis}.

\begin{lemma}\label{lemma: p_ab is in E(X,G)}
	Let $G=\Homeo(W_P)$ with the standard action $G\curvearrowright W_P$. For every pair $a\neq b\in\End(W_P)$ the idempotent $p_{a,b}$ is in $E(W_P,G)$.
\end{lemma}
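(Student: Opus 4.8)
The plan is to exhibit $p_{a,b}$ as the pointwise limit of a sequence of homeomorphisms, which places it in $E(W_P,G)=\overline{G}$. Fix the arc $[a,b]$ and choose two strictly monotone sequences of points on it: points $c_n\to b$ and points $\widetilde c_n\to a$ (we may take these to be regular points of $W_P$, though this is inessential). For each $n$ the point $c_n$ separates $W_P$ into the two closed pieces $C_{c_n}(a)$ (the large one, containing $a$) and $C_{c_n}(b)$ (the small one, near $b$), meeting only in $c_n$; likewise $\widetilde c_n$ separates $W_P$ into $C_{\widetilde c_n}(a)$ (small, near $a$) and $C_{\widetilde c_n}(b)$ (large, containing $b$). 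By the remark following Proposition \ref{prop: characterization of W_P} all four pieces are homeomorphic to $W_P$, each cut point is an endpoint of the two pieces it determines, and $b$ is an endpoint of both pieces containing it.

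Using the endpoint-prescription part of Proposition \ref{prop: characterization of W_P}, I would fix for each $n$ a homeomorphism $\varphi_n\colon C_{c_n}(a)\to C_{\widetilde c_n}(a)$ sending $c_n\mapsto\widetilde c_n$, and a homeomorphism $\psi_n\colon C_{c_n}(b)\to C_{\widetilde c_n}(b)$ sending $c_n\mapsto\widetilde c_n$ and $b\mapsto b$ (here I use that $c_n$ and $b$ are two distinct endpoints of $C_{c_n}(b)$, and $\widetilde c_n,b$ two distinct endpoints of $C_{\widetilde c_n}(b)$). Exactly as in the proof of the proximality lemma, $h_n:=\varphi_n\cup\psi_n$ is then a well-defined homeomorphism of $W_P$: the two domains cover $W_P$ and meet only in $c_n$, where both maps take the value $\widetilde c_n$, the two images cover $W_P$ and meet only in $\widetilde c_n$, continuity follows from the pasting lemma for the finite closed cover $\{C_{c_n}(a),C_{c_n}(b)\}$, and bijectivity from that of $\varphi_n$ and $\psi_n$. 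Thus $h_n\in G$.

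It then remains to verify that $h_n\to p_{a,b}$ pointwise. For $x=b$ this is immediate, since $b\in C_{c_n}(b)$ for all $n$ and $h_n(b)=\psi_n(b)=b$. For fixed $x\neq b$, let $m$ be the point at which $[a,x]$ branches off $[a,b]$; since $b$ is an endpoint we have $m\neq b$, and because $c_n\to b$ along the arc, $c_n$ eventually lies strictly beyond $m$, whence $x\in C_{c_n}(a)$ and therefore $h_n(x)=\varphi_n(x)\in C_{\widetilde c_n}(a)$. Finally the sets $C_{\widetilde c_n}(a)$ are nested and decrease to $\{a\}$: a point lying in $\bigcap_n C_{\widetilde c_n}(a)$ must sit on the $a$-side of every $\widetilde c_n$, which forces it to equal $a$ because $a$ is an endpoint and $\widetilde c_n\to a$. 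By compactness $\diam C_{\widetilde c_n}(a)\to 0$, so $h_n(x)\to a=p_{a,b}(x)$, giving the desired convergence.

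The routine part is the construction and gluing of the $h_n$, which merely repeats the mechanism of the proximality lemma. I expect the only point needing genuine care to be the convergence analysis, and specifically the two limiting facts on which it rests: that every fixed $x\neq b$ is eventually captured by the large piece $C_{c_n}(a)$, and that the small target pieces $C_{\widetilde c_n}(a)$ shrink to the single point $a$. Both reduce to the order-theoretic behaviour along the arc $[a,b]$ together with $a$ and $b$ being endpoints, so I would take care to state cleanly why metric convergence $c_n\to b$ (resp.\ $\widetilde c_n\to a$) forces the corresponding monotone behaviour along the arc.
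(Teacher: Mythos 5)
Your proof is correct and uses essentially the same construction as the paper: cut $W_P$ at a point near $b$ (isolating $b$ from everything else under consideration) and at a point near $a$ (giving a small target piece), and glue two homeomorphisms of copies of $W_P$ supplied by Proposition~\ref{prop: characterization of W_P}; you merely package the paper's ``every basic neighbourhood of $p_{a,b}$ meets $G$'' verification as an explicit pointwise-convergent sequence. One small correction: taking $c_n$ and $\widetilde c_n$ regular is not inessential, since for a ramification point the two pieces $C_{c_n}(a)$ and $C_{c_n}(b)$ would fail to cover $W_P$.
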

\begin{proof}
	Fix $a\neq b\in\End(W_P)$. We want to show that every neighbourhood of $p_{a,b}$ in the product topology of $W_P^{W_P}$ meets $\Homeo(W_P)$. In other words given $\{x_1,\ldots,x_n\}\in W_P$ and $\varepsilon>0$ we want to find $h\in\Homeo(W_P)$ such that $$d(h(x_i),p_{a,b}(x_i))<\varepsilon\text{ for all }1\leq i\leq n.$$
	Let $z$ be a regular point such that $C_z(a)\neq C_z(b)$ and $\diam(C_z(a))<\varepsilon$. Let $y$ be a regular point such that $C_y(b)\cap\{x_1,\ldots,x_n\}\subseteq\{b\}$. By Proposition \ref{prop: characterization of W_P} we have $C_z(a)\cong C_y(a)$ and $C_y(b)\cong C_z(b)$ so that we can find homeomorphisms $\varphi_1\colon C_y(a)\to C_z(a)$ and $\varphi_2\colon C_y(b)\to C_z(b)$ such that \begin{align*}
	\varphi_1(a)=a\quad& \varphi_1(y)=z \\
	\varphi_2(b)=b\quad& \varphi_2(y)=z.
	\end{align*}
	Then $h=\varphi_1\cup\varphi_2$ is as desired. Indeed $h$ is well defined since $C_y(a)\cap C_y(b)=\{y\}$ and $\varphi_1(y)=z=\varphi_2(y)$, $h$ is defined everywhere since $W_P=C_y(a)\cup C_y(b)$, it is continuous since it is continuous on the finite closed cover $\{C_y(a),C_y(b)\}$, and it is both injective, since $\varphi_1$ and $\varphi_2$ are, and surjective, since the images of $\varphi_1$ and $\varphi_2$ cover $W_P$. Moreover if $x_i\neq b$, then $x_i\in C_y(a)$, so that $h(x_i)\in C_z(a)$, which implies that $d(p_{a,b}(x_i),h(x_i))=d(a,h(x_i))<\varepsilon$. If instead $x_i=b$, then $x_i\in C_y(b)$, so that $h(b)=\varphi_2(b)=b=p_{a,b}(b)$ and $d(p_{a,b}(x_i),h(x_i))=0$.
\end{proof}

Since $\End(W_P)$ is uncountable we see that the dynamical system $\Homeo(W_P)\curvearrowright W_P$ satisfies the assumptions of Proposition \ref{prop: Ellis} by fixing $a\in\End(W_P)$ and letting $b_i$ vary over $\End(W_P)\setminus\{a\}$. In particular its enveloping semigroup is not first countable, which immediately gives the following theorem, answering negatively Question 12.2 of \cite{GM}. 

\begin{thm}\label{thm: Wazewski is not tame1}
The dynamical system $\Homeo(W_P)\curvearrowright W_P$ is not $\tame_1$.
\end{thm}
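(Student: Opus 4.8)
The plan is to obtain the theorem as an immediate consequence of the results assembled above. By the characterization of tameness recalled in the introduction, a metric system is $\tame_1$ exactly when its Ellis semigroup is a \emph{first countable} Rosenthal compactum; since Glasner and Megrelishvili showed in \cite{GMtrees} that every action on a dendrite is tame, $E(W_P,\Homeo(W_P))$ is already known to be a Rosenthal compactum. Hence it suffices to prove that $E(W_P,\Homeo(W_P))$ is \emph{not} first countable, and for this I would invoke Proposition \ref{prop: Ellis}.

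To apply that proposition I must exhibit its hypotheses for the system $(W_P,\Homeo(W_P))$. Proximality is precisely what was proved in the proximality lemma above. For minimality I would argue from the homogeneity furnished by Proposition \ref{prop: characterization of W_P}: any endpoint can be mapped to any other by a homeomorphism of $W_P$ (place both inside countable dense sets of endpoints and match those sets up), so the orbit of an endpoint is all of $\End(W_P)$, and since the endpoints are dense this orbit is dense in $W_P$; a parallel argument for the remaining points, using that homeomorphisms preserve the order of a point and that each type of point is dense, shows every orbit is dense, so the system is minimal (this can alternatively be extracted from \cite{duchesne2018group}). Finally, the uncountable family of idempotents demanded by Proposition \ref{prop: Ellis} is supplied by Lemma \ref{lemma: p_ab is in E(X,G)}: fixing a single $a\in\End(W_P)$ and letting $b$ range over the uncountable set $\End(W_P)\setminus\{a\}$, each $p_{a,b}$ is an idempotent of $E(W_P,\Homeo(W_P))$.

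With the three hypotheses in hand, Proposition \ref{prop: Ellis} gives that the constant idempotent $p_a$ belongs to $E(W_P,\Homeo(W_P))$ and admits no countable neighbourhood basis, so the Ellis semigroup is not first countable and the system is therefore not $\tame_1$. I expect no real obstacle in this final assembly, as all the substantive work lives in the preceding lemmas; the only hypothesis of Proposition \ref{prop: Ellis} not already on record is minimality, and the homogeneity of $W_P$ makes its verification routine. In fact minimality is not essential for the conclusion: since each $p_{a,b}$ differs from $p_a$ only at the single point $b$, any countable collection of basic neighbourhoods of $p_a$ constrains only countably many coordinates, and choosing $b$ outside that countable set produces a point $p_{a,b}\neq p_a$ lying in every one of them, which already contradicts the existence of a countable neighbourhood basis at $p_a$.
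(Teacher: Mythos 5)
Your proposal is correct and follows essentially the same route as the paper: proximality from the lemma, the uncountable family of idempotents $p_{a,b}$ from Lemma \ref{lemma: p_ab is in E(X,G)}, and then Proposition \ref{prop: Ellis} to conclude that the Ellis semigroup is not first countable. Your extra care about minimality (which the paper takes for granted) and your closing observation that a direct cardinality argument on basic neighbourhoods bypasses it entirely are both sound, but they do not change the substance of the argument.
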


Now that we know that the dynamical system $\Homeo(W_P)\curvearrowright W_P$ is not $\tame_1$, it provides an example answering another question from \cite{GM}, but first we need the following definition.

\begin{defn}
    A dynamical system $(X,G)$ is called \emph{weakly rigid} if there exists a net $g_\lambda$ in $E(X,G)\setminus\{\mathrm{Id_X}\}$ with $g_\lambda\to\Id_X$ pointwise. If instead there exists a sequence $g_i$ in $E(X,G)\setminus\{\mathrm{Id_X}\}$ with $g_i\to\Id_X$, the system is called \emph{rigid}. 
\end{defn}

Note that, by \cite[Theorem 3.2]{GMHNSsystems}, a tame dynamical system is weakly rigid if and only if it is rigid. 

In Question 5.6 of \cite{GM} Glasner and Megrelishvili asked whether there are examples of the following:
\begin{enumerate}
    \item A tame, rigid, metric system $(X,G)$ which is not hereditarily nonsensitive.
    \item A tame, rigid, minimal, metric system $(X,G)$ which is not equicontinuous. 
\end{enumerate}

We check that the dynamical system $(W_P,\Homeo(W_P))$ is rigid in the following lemma. Since it is minimal and tame but neither hereditarily nonsensitive (since its Ellis semigroup is not 1st countable by Theorem \ref{thm: Wazewski is not tame1} so in particular it is not metrizable), nor equicontinuous (since its Ellis semigroup contains discontinuous functions), it provides a positive answer to the question above.

\begin{lemma}\label{lemma: Wazewski is rigid}
    The dynamical system $(W_P,\Homeo(W_P))$ is rigid.
\end{lemma}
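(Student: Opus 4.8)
The plan is to exhibit a sequence of genuine homeomorphisms $h_n\in\Homeo(W_P)\subseteq E(W_P,\Homeo(W_P))$, each different from $\Id_{W_P}$, that are supported on subdendrites shrinking to a point and hence converge pointwise to the identity. Fix once and for all an endpoint $a\in\End(W_P)$ together with a point $r\neq a$. Along the arc $[r,a]$ I would choose regular points $c_n\to a$ so that the subdendrites $D_n:=C_{c_n}(a)$ form a decreasing sequence of closed connected neighbourhoods of $a$ with $\bigcap_n D_n=\{a\}$; since $a$ is an endpoint and $W_P$ is a locally connected compact metric space, these can be arranged (just as in the proof that the action is proximal, where small-diameter components are cut off at regular points) so that $\diam(D_n)\to 0$. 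By the remark following Proposition \ref{prop: characterization of W_P}, each $D_n$ is homeomorphic to $W_P$, with $c_n$ corresponding to an endpoint.

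Next, inside each $D_n$ I would produce a nontrivial self-homeomorphism fixing the cut point $c_n$. Using the ``in particular'' clause of Proposition \ref{prop: characterization of W_P}, which lets one prescribe images of countable dense sets of endpoints, there is a homeomorphism of $W_P$ fixing a chosen endpoint $e_0$ while mapping some other endpoint $u$ to a distinct endpoint $v$; transporting such a map through an isomorphism $D_n\cong W_P$ sending $c_n$ to $e_0$ yields a homeomorphism $\psi_n\colon D_n\to D_n$ with $\psi_n(c_n)=c_n$ and $\psi_n\neq\Id_{D_n}$. Because $c_n$ is a regular point, $W_P\setminus\{c_n\}$ has exactly two components, so $W_P=D_n\cup E_n$, where $E_n$ is the closure of the other component and $D_n\cap E_n=\{c_n\}$. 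I then set $h_n=\psi_n\cup\Id_{E_n}$, i.e.\ $h_n=\psi_n$ on $D_n$ and $h_n=\Id$ on $E_n$. Exactly as in Lemma \ref{lemma: p_ab is in E(X,G)}, the pasting is well defined (both pieces fix $c_n$), continuous on the finite closed cover $\{D_n,E_n\}$, and bijective, so $h_n\in\Homeo(W_P)$; moreover $h_n\neq\Id_{W_P}$ since $\psi_n\neq\Id_{D_n}$.

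Finally I would verify the pointwise convergence $h_n\to\Id_{W_P}$. For a fixed $x\neq a$ we have $x\notin D_n$ for all large $n$, since the $D_n$ shrink to $\{a\}$, whence $h_n(x)=x$ eventually; for $x=a$ both $a$ and $h_n(a)=\psi_n(a)$ lie in $D_n$, so $d(h_n(a),a)\leq\diam(D_n)\to 0$. In either case $h_n(x)\to x$, so $(h_n)$ is a sequence in $E(W_P,\Homeo(W_P))\setminus\{\Id_{W_P}\}$ converging to $\Id_{W_P}$, which is precisely rigidity.

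The point requiring the most care is the metric control $\diam(D_n)\to 0$: it relies on $a$ being an \emph{endpoint}, so that the sets $C_{c_n}(a)$ form a neighbourhood basis at $a$, and this is the one place where local connectedness of the dendrite is genuinely used. The construction of the $\psi_n$ and the pasting of $h_n$ are routine variants of arguments already carried out for the proximality statement and for Lemma \ref{lemma: p_ab is in E(X,G)}.
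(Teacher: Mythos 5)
Your proof is correct and follows essentially the same strategy as the paper's: both exhibit nontrivial homeomorphisms supported on subdendrites shrinking to a single point (obtained by gluing a nontrivial self-homeomorphism of a small subdendrite, itself a copy of $W_P$ by Proposition \ref{prop: characterization of W_P}, with the identity outside), so that they converge pointwise to $\Id_{W_P}$. The only cosmetic difference is that you shrink the supports $C_{c_n}(a)$ to an endpoint $a$, whereas the paper shrinks the supports $C_{a_i,b_i}$ to a regular point $c$; both choices work.
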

\begin{proof}
    Fix two sequences $(a_i)_{i<\omega},(b_i)_{i<\omega}$ in $W_P$ such that:
    \begin{itemize}
        \item For all $i<\omega$, $a_i$ and $b_i$ are regular points.
        \item There is a regular point $c$ such that $\lim_i a_i=\lim_i b_i=c$.
        \item For every $i<\omega$, $a_i$ and $b_i$ are in different components of $W_P\setminus\{c\}$.
    \end{itemize}
    Now let $g_i\colon W_P\to W_P$ be any homeomorphism which is not the identity on $C_{a_i,b_i}$, but which is the identity on the rest of $W_P$, which can be constructed by glueing together three maps as in the proof of Lemma \ref{lemma: p_ab is in E(X,G)}. Since $g_i\to\mathrm{Id}_{W_P}$ pointwise, this shows that $(W_P,\Homeo(W_P))$ is rigid.
\end{proof}

\section{Another example of a dendrite with an action that is not \texorpdfstring{$\tame_1$}{tame 1}}
\label{section: the second example}
In this section we construct another dendrite $X$ with the property that the action $\Homeo(X)\curvearrowright X$ is not $\tame_1$. We will describe the construction of a dendrite in which all ramification points have order either $3$ or $4$, but the same construction can be carried out for any $n\neq m$ to produce a dendrite with the same dynamical properties in which all ramification points have order either $n$ or $m$. It is also possible to do variations of this construction to produce dendrites with more than two orders for the ramification points, but we will not describe them there, since they don't seem to exhibit new dynamical behaviour. In order to describe the dendrite we will make use of the following two auxilliary spaces, each of which is homeomorphic to the unit interval $[0,1]$ but is augmented with some extra information. Let $T_0$ and $T_1$ be two copies of $[0,1]$ with fixed countable dense sets $D_0$ and $D_1$, such that, for all $n\in\omega$ and $i\in\{0,1\}$, $n/(n+1)\not\in D_i$. We now colour all points of $D_0\cap (n/(n+1),(n+1)/(n+2))$ red if $n$ is even and green if $n$ is odd. In $T_1$ we do the same, except that the colours are swapped with respect to the parity of $n$, see Figure \ref{fig: T_0 and T_1}. The intuitive idea is that red points of $D_i$ will become ramification points of order $4$ in $X$, while the green ones will become ramification points of order $3$.

\begin{figure}[h]
\begin{tikzpicture}[xscale=10,yscale=2]

    \definecolor{MyRed}{RGB}{222,6,26}
    \definecolor{MyGreen}{RGB}{79,180,67}
    
    \draw [line width=.1mm] (0,0) -- (1,0);
    \draw [line width=.5mm, color=MyRed] (0,0) -- (1/2,0);
    \draw [line width=.5mm, color=MyGreen] (1/2,0) -- (2/3,0);
    \draw [line width=.5mm, color=MyRed] (2/3,0) -- (3/4,0);
    \draw [line width=.5mm, color=MyGreen] (3/4,0) -- (4/5,0);
    \draw [line width=.5mm, color=MyRed] (4/5,0) -- (5/6,0);
    \draw [line width=.5mm, color=MyGreen] (5/6,0) -- (6/7,0);
    \draw [line width=.5mm, color=MyRed] (6/7,0) -- (7/8,0);
    \node [fill,circle,scale=0.4,label=above:{1}] at (1,0) {};
    \node [fill,circle,scale=0.4,label=above:{0}] at (0,0) {};
    \node [label=below:{$T_0$}] at (1/2,0) {};

    \draw [line width=.1mm] (0,1) -- (1,1);
    \draw [line width=.5mm, color=MyGreen] (0,1) -- (1/2,1);
    \draw [line width=.5mm, color=MyRed] (1/2,1) -- (2/3,1);
    \draw [line width=.5mm, color=MyGreen] (2/3,1) -- (3/4,1);
    \draw [line width=.5mm, color=MyRed] (3/4,1) -- (4/5,1);
    \draw [line width=.5mm, color=MyGreen] (4/5,1) -- (5/6,1);
    \draw [line width=.5mm, color=MyRed] (5/6,1) -- (6/7,1);
    \draw [line width=.5mm, color=MyGreen] (6/7,1) -- (7/8,1);
    \node [fill,circle,scale=0.4,label=above:{1}] at (1,1) {};
    \node [fill,circle,scale=0.4,label=above:{0}] at (0,1) {};
    \node [label=below:{$T_1$}] at (1/2,1) {};
\end{tikzpicture}
\caption{$T_0$ and $T_1$, with the first few intervals of the form $(n/(n+1),(n+1)/(n+2))$. Points that will become of order $4$ during the construction are coloured in red, while points that will become of order $3$ are coloured in green.}
\label{fig: T_0 and T_1}
\end{figure}

To construct $X$ let $X_0$ be four copies of $T_0$ identified together in $0$. Now given $X_i$ we construct $X_{i+1}$ by attaching more copies of $T_0$ or $T_1$ until there are no free arcs left in $X_i$. For each maximal free arc $A$ in $X_i$, which will necessarily be a copy of either $T_0$ or $T_1$ by construction and so has a fixed countable dense set $D(A)$ we do the following. For each $a\in D(A)$ let $T(a)=T_0\vee T_0$ is $a$ is red, where the two copies of $T_0$ are joined in $0$, and $T(a)=T_1$ if $a$ is green. To each $a\in D(A)$ we attach a copy of $T(a)$ such that:
\begin{itemize}
    \item The point $a$ is the point labelled $0$ of $T(a)$.
    \item $X_i\cap T(a)=\{a\}$.
    \item $T(a_1)\cap T(a_2)=\varnothing$ for all pairs $a_1\neq a_2\in D(a)$.
    \item For every $\varepsilon>0$, there are only finitely many $a\in D(a)$ such that $\diam(T(a))>\varepsilon$.
    \item For every $a\in D(A)$, $\diam(T(a))<2^{-i}$.
\end{itemize}

See Figure \ref{fig: X_1} for a drawing of $X_1$. Note that every $X_i$ is a dendrite. Moreover we define a monotone (meaning that the preimage of a connected set is connected) surjection $f^{i+1}_i\colon X_{i+1}\to X_i$ so that for every maximal free arc $A$ of $X_i$ and every $a\in D(A)$, $f^{i+1}_i(T(a))=a$, while $f^{i+1}_i$ is the identity when restricted to $X_i$.

We define $X=\varprojlim X_i$ which is a dendrite by Theorem 10.36 of \cite{Nadler} and, thanks to the last condition in the construction above, the hypothesis of the Anderson-Choquet embedding theorem \cite[Theorem 2.10]{Nadler} are satisfied, so that $$X\cong \overline{\bigcup_{i=1} X_i},$$ when the spaces $X_i$ are realized as an increasing chain of subspaces of the plane. Note that, since neither the ramification point of order $3$ nor those of order $4$ are arcwise dense in $X$, the dendrite $X$ is not homeomorphic to $W_{\{3,4\}}$.

\newcommand{\drawlineh}[4]{%
\draw[line width=0.35mm, black] (#1,#2) -- (#3,#4);
\draw[line width=0.5mm] (#1,#2) edge (1/2*#3,#4) edge (2/3*#3,#4) edge (3/4*#3,#4) edge (4/5*#3,#4) edge (5/6*#3,#4) edge (6/7*#3,#4);
}

\definecolor{MyRed}{RGB}{222,6,26}
\definecolor{MyGreen}{RGB}{79,180,67}

\newcommand{\drawlinev}[4]{%
\draw[line width=0.35mm, black] (#1,#2) -- (#3,#4);
\draw[line width=0.5mm] (#1,#2) edge (#3,1/2*#4) edge (#3,2/3*#4) edge (#3,3/4*#4) edge (#3,4/5*#4) edge (#3,5/6*#4) edge (#3,6/7*#4);
}

\def\alternatecolorred{%
    \pgfkeysalso{MyRed}%
    \global\let\alternatecolor\alternatecolorblue 
}
\def\alternatecolorblue{%
    \pgfkeysalso{MyGreen}%
    \global\let\alternatecolor\alternatecolorred 
}
\let\alternatecolor\alternatecolorred 

\begin{figure}[h]
\begin{tikzpicture}[xscale=6, yscale=6, every edge/.append code = {%
    \global\let\currenttarget\tikztotarget 
    \pgfkeysalso{append after command={(\currenttarget)}}
    \alternatecolor
}]

\drawlineh{0}{0}{1}{0}
\drawlineh{0}{0}{-1}{0}
\drawlinev{0}{0}{0}{1}
\drawlinev{0}{0}{0}{-1}

\drawlineh{0}{1/4}{1/5}{1/4}
\drawlineh{0}{1/4}{-1/5}{1/4}

\drawlinev{1/4}{0}{1/4}{1/5}
\drawlinev{1/4}{0}{1/4}{-1/5}

\drawlineh{0}{-1/4}{1/5}{-1/4}
\drawlineh{0}{-1/4}{-1/5}{-1/4}

\drawlinev{-1/4}{0}{-1/4}{1/5}
\drawlinev{-1/4}{0}{-1/4}{-1/5}

\drawlineh{0}{-1/8}{1/10}{-1/8}
\drawlineh{0}{-1/8}{-1/10}{-1/8}

\drawlineh{0}{-3/8}{1/10}{-3/8}
\drawlineh{0}{-3/8}{-1/10}{-3/8}

\drawlinev{-1/8}{0}{-1/8}{1/10}
\drawlinev{-1/8}{0}{-1/8}{-1/10}

\drawlinev{-3/8}{0}{-3/8}{1/10}
\drawlinev{-3/8}{0}{-3/8}{-1/10}

\drawlinev{3/8}{0}{3/8}{1/10}
\drawlinev{3/8}{0}{3/8}{-1/10}

\drawlinev{1/8}{0}{1/8}{1/10}
\drawlinev{1/8}{0}{1/8}{-1/10}

\drawlineh{0}{1/8}{1/10}{1/8}
\drawlineh{0}{1/8}{-1/10}{1/8}

\drawlineh{0}{3/8}{1/10}{3/8}
\drawlineh{0}{3/8}{-1/10}{3/8}

\let\alternatecolor\alternatecolorblue
\drawlineh{0}{13/24}{1/18}{13/24}
\drawlineh{0}{7/12}{1/12}{7/12}
\drawlineh{0}{15/24}{1/18}{15/24}

\drawlineh{0}{-13/24}{-1/18}{-13/24}
\drawlineh{0}{-7/12}{-1/12}{-7/12}
\drawlineh{0}{-15/24}{-1/18}{-15/24}

\drawlinev{13/24}{0}{13/24}{-1/18}
\drawlinev{7/12}{0}{7/12}{-1/12}
\drawlinev{15/24}{0}{15/24}{-1/18}

\drawlinev{-13/24}{0}{-13/24}{1/18}
\drawlinev{-7/12}{0}{-7/12}{1/12}
\drawlinev{-15/24}{0}{-15/24}{1/18}

\end{tikzpicture}
\caption{(An approximation of) the stage $X_1$ of the construction. Points that have order $4$ or will become points of order $4$ in $X_2$ are coloured in red, while points that have order $3$ in $X_1$ or will become points of order $3$ in $X_2$ are coloured in green.}
\label{fig: X_1}
\end{figure}

\begin{defn}\label{def: three kind of endpoints}
    We distinguish three types of endpoints in $X$ as follows. Given $x\in\End(X)$ we say that $x$ is 
    \begin{itemize}
        \item \emph{Green}, if for every arc $\gamma\colon[0,1]\to X$ with $\gamma(1)=x$, there is $r\in[0,1]$, such that all branching points of $X$ contained in $\gamma([r,1])$ have order $3$.
        \item \emph{Red}, if for every arc $\gamma\colon[0,1]\to X$ with $\gamma(1)=x$, there is $r\in[0,1]$, such that all branching points of $X$ contained in $\gamma([r,1])$ have order $4$.
        \item \emph{Alternating} otherwise.
    \end{itemize}
    We will also call ramification points with the colour they've been assigned in the construction described above. In other words we will refer to ramification points of order $4$ as \emph{red} and to ramification points of order $3$ as \emph{green}.
\end{defn}

It is clear from the construction that alternating endpoints are dense in $X$, and it is not hard to check that the same holds for green and red endpoints. It is also easy to check that for each of the three types above, $X$ contains uncountably many endpoints of that type, which we do in the following lemma.

\begin{lemma}\label{lemma: uncountably many endpoints of each type}
    For each of the three types of endpoints introduced in Definition \ref{def: three kind of endpoints}, $X$ contains uncountably many endpoints of that type.
\end{lemma}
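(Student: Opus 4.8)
The plan is to construct, for each of the three types, an explicit uncountable family of endpoints, most naturally indexed by branches through an infinitely-branching tree of choices made during the inverse-limit construction. Recall that a point of $X=\varprojlim X_i$ is a compatible sequence $(x_i)_{i}$ with $x_i\in X_i$ and $f^{i+1}_i(x_{i+1})=x_i$, and such a point is an endpoint of $X$ precisely when it is not eventually trapped inside the interior of a single arc. The key observation is that at each stage, every maximal free arc $A$ carries a countable dense set $D(A)$ to each of whose points a fresh copy of $T(a)$ is attached; this branching gives us, at every stage, infinitely many incomparable ways to continue a descending path, and hence an uncountable collection of infinite branches.

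First I would make precise a notion of a \emph{nested sequence of arcs} $A_0\supseteq_{f} A_1\supseteq_{f}\cdots$, where $A_i$ is a maximal free arc of $X_i$ and $A_{i+1}$ is (a copy of $T_0$ or $T_1$ sitting inside) one of the arcs $T(a)$ attached to some $a\in D(A_i)$ in passing to $X_{i+1}$. Because the diameters of the attached arcs shrink (the construction enforces $\diam(T(a))<2^{-i}$), any such nested sequence determines a single point of $X$, namely the limit $x=(x_i)_i$ with $x_i=f^i_i\circ\cdots$ the image of the shrinking arcs; and this point is an endpoint, since it is approached from only one side along the branch. Two distinct branches, differing at some finite stage, yield arcs that become disjoint after attaching, hence distinct endpoints. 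I would then observe that there are uncountably many such branches: at each stage one has infinitely (in fact countably) many admissible choices of continuation, and a standard counting argument on an infinitely-branching tree of height $\omega$ gives $2^{\aleph_0}$ many branches, hence uncountably many endpoints.

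The remaining, and genuinely type-specific, point is to arrange the colours along the branch so that the resulting endpoint has the prescribed type. For \textbf{green} endpoints I would always choose, from stage $1$ onward, a green point $a\in D(A_i)$ to continue through, so that $T(a)=T_1$; one must check that from some stage on every branching point lying on the arc $\gamma([r,1])$ approaching $x$ has order $3$, which follows because past a green turn the branch runs along copies of $T_1$ whose marked dense points are themselves green. For \textbf{red} endpoints one does the symmetric thing, continuing through red points so that $T(a)=T_0\vee T_0$ and all nearby branching points have order $4$. For \textbf{alternating} endpoints one forces the branch to pass through infinitely many green and infinitely many red ramification points, e.g. by alternating the colour of the chosen $a$ at successive stages, so that no tail of any approaching arc is monochromatic. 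In each case the uncountability comes from the fact that, even after fixing the colour pattern, there remain infinitely many choices of \emph{which} point of the appropriate colour to continue through at each stage, leaving an infinitely-branching subtree and hence $2^{\aleph_0}$ branches.

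The main obstacle I expect is bookkeeping about the inverse-limit / embedded picture: one must verify carefully that a nested sequence of arcs with shrinking diameter genuinely converges to a point of $X$, that this point is an endpoint (not a regular or ramification point), that distinct branches give distinct points, and — for the colour conditions — that the definition of green/red/alternating in terms of \emph{every} arc $\gamma$ approaching $x$ is actually controlled by the single branch we build, i.e.\ that near $x$ there is essentially one approaching direction so the colour behaviour of that branch dictates the colour behaviour of all approaching arcs. This last coherence check between the global Definition~\ref{def: three kind of endpoints} and the local branch data is the delicate step; everything else is a routine tree-counting and diameter-control argument.
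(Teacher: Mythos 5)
Your proposal follows essentially the same route as the paper's proof: both build a tree of choices of attachment points through the stages $X_i$, use the diameter bounds to show each branch converges to an endpoint, and control the colours along the branch to prescribe the type; the paper merely indexes its branches by $2^\omega$ rather than counting the full infinitely-branching tree of continuations. The one imprecision is your claim that past a green turn the marked points of $T_1$ ``are themselves green'' --- both $T_0$ and $T_1$ carry marked points of both colours, so (as the paper does by requiring the segment from $x_s$ to $y_s$ to be monochromatic) you must additionally choose each continuation point so that the arc back to the previous attachment point contains only branching points of the desired colour, e.g.\ by staying inside the initial colour block of $T(a)$.
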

\begin{proof}
    We write the argument for alternating endpoints. We will construct an injection from $2^\omega$ to the set of alternating endpoints. In $X_0$ we have a fixed countable dense set of points that will become ramification points in $X_1$. Let $x_0$ and $x_1$ be red points from this set. In $X_1$ there is a unique ramification point $y_0$ of order $4$ with $f^1_0(y_0)=x_0$, let $C_0$ be a connected component of $X_1\setminus\{y_0\}$ such that $f^1_0(C_0)=x_0$. On $C_0$ we have a countable dense set of points that will become ramification points in $X_1$. Let $x_{00}$ and $x_{01}$ be two green points from this countable set, chosen so that both are at distance $\leq 1/2$ from an endpoint of $X_1$. Note that $f^1_0(x_{00})=f^1_0(x_{01})=x_0$. Analogously we can find two green points $x_{10}$ and $x_{11}$ that are close to an endpoint of $X_1$ and satisfy $f^1_0(x_{10})=f^1_0(x_{11})=x_1$. We can now iterate this construction, so suppose that for $n\in\omega$ we have constructed $2^n$ points in $X_{n-1}$, indexed over $s\in 2^n$ and such that $f^{n-1}_{n-2}(x_s)=x_{s'}$, where $s'$ is obtained from $s$ by dropping the last digit. For each $s\in 2^n$ let $y_s$ be the unique ramification point in $X_n$ with $f^n_{n-1}(y_s)=x_s$ and let $C_s$ be a connected component of $X_n\setminus\{y_s\}$ such that $f^n_{n-1}(C_s)=x_s$. There is a countable dense set of points in $C_s$ that will become ramification points in $X_{n+1}$. From this set we can choose $x_{s0}$ and $x_{s1}$ of colour opposite to that of $x_s$ and at distance $\leq 2^{-n+1}$ from an endpoint of $X_n$. It is now easy to check that the map  $2^\omega\to\varprojlim X_i$, $s\mapsto (x_n)_{n\in\omega}$ with $x_n=x_{s\upharpoonright n}$ is the desired injection from $2^\omega$ into the set of alternating endpoints of $X$. 

    The argument for red and green endpoints is very similar. For example to produce uncountably many red points we follow the same construction, with the only difference that at every step we choose $x_s$ to be red and to be such that the segment from $x_s$ to $y_s$ only contains red points.
\end{proof}

We now start investigating the dynamical properties of the action $\Homeo(X)\curvearrowright X$, in particular we will show that the action is minimal and not $\tame_1$. In Section \ref{section: open questions and conclusion} we will point out some differences between this action and the action $\Homeo(W_P)\curvearrowright W_P$. Many of the arguments concerning $W_P$ in the previous section were based on Proposition \ref{prop: characterization of W_P}, so we begin by proving an analogous result for the dendrite $X$.

\begin{lemma}\label{lemma: pairs of alternating endpoints}
    Let $E=\{e_1,e_2,\ldots\}$ and $F=\{f_1,f_2,\ldots\}$ be two countable dense subsets of $X$ consisting of alternating endpoints. Then there is $h\in\Homeo(X)$ such that $h(E)=F$. Moreover $h$ can be chosen so that $h(e_1)=f_1$ and $h(e_2)=f_2$.
\end{lemma}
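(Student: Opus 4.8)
The plan is to mimic the back-and-forth argument underlying Charatonik and Dilks' characterization (Proposition \ref{prop: characterization of W_P}), but adapted to the two-colour structure of $X$. The key structural fact we will exploit is that, for any alternating endpoint $e$ and any point $x$ on the arc approaching $e$, the ``cone'' $C_x(e)$ is again homeomorphic to a copy of $X$ (or to a standard piece of the construction) \emph{with its colouring preserved}: approaching an alternating endpoint, one sees infinitely many red and infinitely many green ramification points in the prescribed pattern, so the local combinatorial type near an alternating endpoint is the same everywhere. This homogeneity is what makes a back-and-forth possible.

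\emph{The construction.} I would build $h$ as an increasing union (really a limit) of finite partial isomorphisms. At stage $n$ we have a finite set of ``matched cones'': disjoint subdendrites $U_1,\dots,U_k$ of $X$ with marked boundary points, together with an already-chosen homeomorphism between each $U_j$ and a corresponding cone $V_j$, such that the colours and orders of ramification points are respected and each $e_i$ (resp.\ $f_i$) enumerated so far lies in a matched cone and is sent to (resp.\ is the image of) the appropriate $f$ (resp.\ $e$). To extend, when we must cover the next point $e_{n+1}$ on the domain side, we locate a small cone $U$ around $e_{n+1}$ inside one of the already-matched regions but not yet assigned, find via the homogeneity a colour-matching cone $V$ on the range side whose boundary meets the already-built map correctly and which contains an unused $f_j$ that we designate as the image of $e_{n+1}$, and glue a homeomorphism $U\to V$ fixing the shared boundary point, exactly as in the $\varphi_1\cup\varphi_2$ glueing arguments of Lemmas \ref{lemma: p_ab is in E(X,G)} and the proximality lemma. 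Alternating the roles of $E$ and $F$ at odd and even stages guarantees surjectivity onto $F$ and that every point of $X$ is eventually assigned, so the limit map is a well-defined homeomorphism with $h(E)=F$. To arrange $h(e_1)=f_1$ and $h(e_2)=f_2$ we simply force these two matchings at the very first two stages before the generic back-and-forth begins; one only needs that $e_1,e_2$ and $f_1,f_2$ can be separated into disjoint colour-matching cones, which holds because they are distinct alternating endpoints.

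\emph{Main obstacle.} The delicate point is not the glueing, which is routine given the earlier lemmas, but verifying that the local homogeneity statement is strong enough to always supply a colour- and order-preserving homeomorphism between a domain cone and a range cone \emph{with a prescribed behaviour on the boundary arc}. Near an alternating endpoint the approaching arc carries an infinite alternating pattern of red (order $4$) and green (order $3$) ramification points, and any two such cones must be matched in a way that lines up these patterns; I expect the cleanest route is to prove a Charatonik--Dilks-style recognition lemma for the relevant subdendrites (``any two cones over alternating endpoints, with matching finite boundary data, are isomorphic as coloured dendrites''), and then feed it into the back-and-forth exactly as Proposition \ref{prop: characterization of W_P} was used for $W_P$. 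Checking that the finitely many already-committed boundary identifications never obstruct the next extension—i.e.\ that there is always enough room (an unused $f_j$ in a correctly-coloured cone with the right boundary point) to continue—is the step requiring the most care, and it is here that density of the alternating endpoints and the ``only finitely many cones of diameter $>\varepsilon$'' condition from the construction are used to keep the diameters shrinking so that the limit is continuous.
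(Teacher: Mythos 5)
There is a genuine gap, in two places. First, your entire construction rests on an unproved ``recognition lemma'' for coloured subdendrites: that any two cones over alternating endpoints with matching finite boundary data are isomorphic as coloured dendrites, and in the strong form you state it, that $C_x(e)\cong X$ with colouring preserved. You correctly flag this as the delicate point, but it is not a side condition to be checked later --- it \emph{is} the content of the lemma, and it is at least as hard as the statement you are trying to prove (and the strong form is dubious: unlike $W_P$, the dendrite $X$ is not characterized by arcwise density of its ramification orders, precisely because neither colour class is arcwise dense, so there is no off-the-shelf Charatonik--Dilks characterization to invoke for the cones). Second, the bookkeeping of a cone-based back-and-forth is inconsistent as written: a cone $C_x(e)$ has nonempty interior, so once you fix a homeomorphism $U_j\to V_j$ on a matched cone you have already determined the images of the \emph{infinitely} many elements of $E$ lying in $U_j$, and these will generically not land in $F$. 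Your phrase ``a small cone around $e_{n+1}$ inside one of the already-matched regions but not yet assigned'' cannot be satisfied --- everything inside a matched region is assigned. To rescue this you would need a fusion-type argument in which each stage's homeomorphism is provisional and is overwritten on sub-cones of shrinking diameter, which is a genuinely different and more delicate construction than the increasing union you describe.

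The paper's proof avoids both problems by running the back-and-forth on \emph{arcs} rather than cones. An arc $[x,y]$ in a dendrite contains at most two endpoints of $X$ (its interior points are cut points), so matching finitely many arcs at each stage commits only finitely many elements of $E$ and $F$; and the only structural input needed is the genuinely elementary observation that two arcs whose endpoints have matching types admit a homeomorphism preserving the orders of ramification points. The resulting map, defined on a dense subtree $\bigcup I_n$ and preserving the ramification structure, is then extended to all of $X$ by citing the Duchesne--Monod fact that $\Homeo(X)$ embeds into $\mathrm{Sym}(X_{\mathrm{ram}})$ via restriction to the ramification points. If you want to salvage your route, you should either prove the coloured recognition lemma for cones outright, or switch to the arc-based scheme, where the required matching statement reduces to an isomorphism of coloured countable dense linear orders.
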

\begin{proof}
    The argument is the same as the proof of Theorem 6.2 of \cite{CD}, where a similar statement is proved for the \Wazewski dendrites. The only extra step required here is the trivial observation that if $x_1,x_2,y_1,y_2\in X$ are endpoints such that $x_1$ has the same type as $y_1$ and $x_2$ has the same type as $y_2$ (but the types of $x_1$ and $x_2$ could differ), then there is a homeomorphism $g\colon [x_1,x_2]\to[y_1,y_2]$ such that $g(x_1)=y_1$, $g(x_2)=y_2$, and $h$ preserves the order of ramification points. A similar statement holds for arcs $[e_1,x_1]$ and $[e_2,x_2]$, where $e_1,e_2$ are alternating endpoints and $x_1,x_2$ are ramification points of the same colour. We spell out some details for completeness. 
    
    Let $I_1=[e_1,e_2]$, $J_1=[f_1,f_2]$ and let $h\colon I_1\to J_1$ be a homeomorphism such that $h(e_i)=f_i$ for $i=1,2$ and such that for all ramification points $x$ in $I_1$, $h(x)$ is a ramification point of the same order in $J_1$. We now do a back and forth argument to ensure that every element of $E$ is mapped by $h$ to an element of $F$ in the forward step, while also guaranteeing that every element of $F$ is the image of some element of $E$ in the backward step. We start with a backward step. Let $J_2$ be the shortest arc between $f_3$ and $J_1$, meaning that $J_1\cap J_2=\{s_1\}$ is a singleton. Let $r_1$ be the unique point in $J_1$ with $h(r_1)=s_1$ and note that, by construction, $r_1$ and $s_1$ are ramification points of the same order. Let $n_1$ be the smallest natural number such that $e_{n_1}$ meets $J_1$ in $r_1$, which exists by density of $E$. Let $I_2=[e_{n_1},r_1]$ and define $h\colon I_2\to J_2$ to be a homeomorphism so that $h(e_{n_1})=f_3$ and $h$ maps ramification points in $I_2$ to ramification points of the same order in $J_2$. Now we do a forward step. Let $n_2$ be the smallest natural number such that $e_{n_2}$ is not already in the image of $h$, let $I_3$ be the shortest arc between $e_{n_2}$ and $I_1\cup I_2$, meaning that $I_3\cap(I_1\cup I_2)=\{r_2\}$ is a singleton. Note that there are various possibilities for the position of $r_2$ on $I_1\cup I_2$, but in any case $h$ has already been defined on $r_2$, so let $s_2=h(r_2)$, which is a ramification point in $J_1\cup J_2$ with the same order as $r_2$. Now define $h(e_{n_2})$ to be $f_{m_1}$, where $m_1$ is the smallest index for which $f_{m_1}$ is not already in the image of $h$ and such that $s_2$ is the only point in $[f_{m_1},s_2]\cap(J_1\cup J_2)$, which exists since $F$ is dense in $X$. Let $J_3=[f_{m_1},s_2]$ and define $h\colon I_3\to J_3$ to be a homeomorphism which sends ramification points of $I_3$ to ramification points of the same order in $J_3$. For the next backward step we can let $f_{m_2}$ be the smallest index element of $F$ not already in the image of $h$ and $J_4$ be the shortest arc between $f_{m_2}$ and $J_1\cup J_2\cup J_3$, meaning that $J_4\cap(J_1\cup J_2\cup J_3)=\{s_3\}$ is a singleton. Let $r_3$ to be the only point in $I_1\cup I_2\cup I_3$ with $h(r_3)=s_3$. We now let $e_{n_3}$ to be the smallest index element of $E$ which is not already in the domain of $h$ and such that $r_3$ is the only point in $[e_{n_3},r_3]\cap(I_1\cup I_2\cup I_3)$. Let $I_4=[e_{n_3},r_3]$. We define $h\colon I_4\to J_4$ to be a homeomorphism such that preserves the order of ramification points and such that $h(e_{n_3})=f_{m_2}$. By iterating this construction we define $I_5,J_5,I_6,J_6,I_7,J_7,\ldots$ and we obtain a homeomorphism $h\colon\bigcup I_n\to\bigcup J_n$. Since $h$ is defined on a dense set and preserves the order of ramification points, it can be extended to a homeomorphism $h\colon X\to X$. In the last step we use the fact that the topology of $\Homeo(X)$ coincides with the one obtained by embedding $\Homeo(X)$ as a subgroup of $\mathrm{Sym}(X_{\mathrm{ram}})$ \cite[Proposition 2.4]{duchesneMonod}, where $X_{\mathrm{ram}}$ is the set of ramification points of $X$, $\mathrm{Sym}$ denotes its permutation group with the pointwise convergence topology and the embedding $\Homeo(X)\to\mathrm{Sym}(X_{\mathrm{ram}})$ is given by $h\mapsto h\upharpoonright X_{\mathrm{ram}}$.
\end{proof}

\begin{prop}\label{prop: the action of Homeo(X) on X is minimal} 
    The action $\Homeo(X)\curvearrowright X$ is minimal.
\end{prop}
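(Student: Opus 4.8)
The plan is to show that the orbit closure of every point $x \in X$ is all of $X$. Since minimality of a dynamical system $(X,G)$ is equivalent to every orbit being dense, I would fix an arbitrary $x\in X$ and an arbitrary nonempty open $U\subseteq X$, and produce $h\in\Homeo(X)$ with $hx\in U$. The natural strategy is to exploit Lemma \ref{lemma: pairs of alternating endpoints}, which gives a rich supply of homeomorphisms moving prescribed alternating endpoints to other alternating endpoints. So first I would reduce to the case where $x$ is an endpoint, and moreover an alternating endpoint: indeed, since homeomorphisms act continuously and alternating endpoints are dense in $X$, it suffices to hit a dense set of targets, and I can arrange $U$ to contain an alternating endpoint.

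\medskip

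\emph{First I would handle the case of alternating endpoints directly.} Given any two alternating endpoints $x$ and $y$, I want $h\in\Homeo(X)$ with $h(x)=y$. To invoke Lemma \ref{lemma: pairs of alternating endpoints}, I would build two countable dense sets $E=\{e_1,e_2,\ldots\}$ and $F=\{f_1,f_2,\ldots\}$ of alternating endpoints with $e_1=x$ and $f_1=y$ (this is possible because alternating endpoints are dense, by the remark preceding Lemma \ref{lemma: uncountably many endpoints of each type}, and there are uncountably many of them by Lemma \ref{lemma: uncountably many endpoints of each type}). The lemma then yields $h\in\Homeo(X)$ with $h(E)=F$ and in particular $h(e_1)=f_1$, i.e.\ $h(x)=y$. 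Thus the orbit of any alternating endpoint contains every alternating endpoint, hence its closure contains $\overline{\{\text{alternating endpoints}\}}=X$, so such orbits are dense.

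\medskip

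\emph{Next I would bootstrap from alternating endpoints to arbitrary points.} The cleanest route is to show that \emph{every} point $x\in X$ has an alternating endpoint in its orbit closure, which by the previous paragraph forces $\overline{Gx}=X$. To see this, fix an alternating endpoint $e$ and a target open set $U$; pick an alternating endpoint $e'\in U$ and, using the above, a homeomorphism sending $e$ to $e'$. Then I would argue that for \emph{any} $x$, a suitable homeomorphism can push $x$ arbitrarily close to a chosen alternating endpoint: given $\varepsilon>0$, one can choose an alternating endpoint $b$ and build a homeomorphism supported on a small subdendrite $C_{z}(b)$ of diameter $<\varepsilon$ containing $b$, which maps $x$ into $C_z(b)$ (the construction is by the same cut-and-glue technique as in the proofs of Lemma \ref{lemma: p_ab is in E(X,G)} and Lemma \ref{lemma: Wazewski is rigid}, using that every $C_z(b)$ and its complement are again dendrites of the required type). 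Since $\varepsilon$ is arbitrary, $b\in\overline{Gx}$, completing the argument.

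\medskip

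\emph{The hard part will be} the gluing step for arbitrary $x$: unlike the \Wazewski case, the components $C_z(b)$ of $X\setminus\{z\}$ are \emph{not} all homeomorphic to $X$, since green and red ramification points are not arcwise dense, so I cannot simply cite Proposition \ref{prop: characterization of W_P}. I will instead need a local matching statement for subdendrites of $X$ of the same ``colour type,'' analogous to the type-preserving back-and-forth of Lemma \ref{lemma: pairs of alternating endpoints} but carried out on a subdendrite, to guarantee that the relevant pieces are homeomorphic via a map respecting the orders of ramification points. Once this local homogeneity is established, the verification that the glued map is a well-defined homeomorphism is routine, following verbatim the gluing bookkeeping already spelled out in the proof of Lemma \ref{lemma: p_ab is in E(X,G)}.
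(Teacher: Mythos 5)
Your first step---that the orbit of every alternating endpoint is dense, via Lemma \ref{lemma: pairs of alternating endpoints} applied to countable dense sets of alternating endpoints with $e_1=x$, $f_1=y$---is correct and agrees with the paper. The gap is in the bootstrap to arbitrary points. As literally stated, the mechanism is incoherent: a homeomorphism supported on (i.e.\ equal to the identity outside of) a small subdendrite containing $b$ cannot move a point $x$ lying outside that subdendrite into it. The charitable reading is the component-swapping trick of Lemma \ref{lemma: p_ab is in E(X,G)}, but that is exactly what breaks for this dendrite, as you yourself flag: the two components of $X\setminus\{z\}$ are in general not homeomorphic (one may carry only red ramification points while the other carries both colours), so there is no analogue of Proposition \ref{prop: characterization of W_P} to invoke, and the ``local matching statement for subdendrites of the same colour type'' that your argument hinges on is left entirely unproved---indeed it is false for an arbitrary choice of cut point $z$, so you would at minimum have to explain how to choose $z$ and $b$ compatibly with the colour data near $x$. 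You also do not address the case where $x$ is itself a red or green endpoint, for which every image $h(x)$ is again an endpoint of the same type; the paper handles these by showing that $\End(X)$ splits into exactly three orbits (alternating, red, green), each dense.

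The paper avoids all local surgery with one observation you are missing: a homeomorphism of a dendrite carries the arc $[e_1,e_2]$ onto the arc $[h(e_1),h(e_2)]$. Given a non-endpoint $x$, choose alternating endpoints $e_1,e_2$ with $x\in[e_1,e_2]$ and alternating endpoints $f_1,f_2$ spanning an arc of diameter $<\varepsilon$ near the target; Lemma \ref{lemma: pairs of alternating endpoints} supplies $h$ with $h(e_i)=f_i$, whence $h(x)\in[f_1,f_2]$ lands within $\varepsilon$ of the target. This, together with the three dense endpoint orbits, finishes the proof with no homogeneity of proper subdendrites required. I recommend replacing your gluing step with this arc-image argument.
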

\begin{proof}
    Note that the same argument as in the proof of Lemma \ref{lemma: pairs of alternating endpoints} shows that whenever $e_1,e_2$ and $f_1,f_2$ are endpoints of $X$ such that $e_1,f_1$ are alternating while $e_2$ and $f_2$ are either both red or both green, then there exists $h\in\Homeo(X)$ with $h(e_1)=f_1$ and $h(e_2)=f_2$. This in particular implies that the action $\Homeo(X)\curvearrowright\End(X)$ has three orbits, those being the three kind of endpoints distinguished in Definition \ref{def: three kind of endpoints}. As those three orbits are all dense in $X$, we only need to show that the orbits of regular and ramification points are dense. Fix $x\in X\setminus\End(X)$, $y\in X$ and $\varepsilon>0$. We want to find $h\in\Homeo(X)$ with $d(h(x),y)<\varepsilon$. Let $z\in X\setminus\End(X)$ be such that $d(z,y)<\varepsilon/3$ (if $y$ is not an endpoint we can take $z=y$). We will construct $h\in\Homeo(X)$ such that $d(h(x),z)\leq\varepsilon/3$, and conclude by the triangle inequality. Using the fact that alternating points are dense in $X$, let $e_1,e_2$ be alternating endpoints with $x\in[e_1,e_2]$ and let $f_1,f_2$ be alternating endpoints with $z\in[f_1,f_2]$ and $\diam([f_1,f_2])<\varepsilon/6$. By Lemma \ref{lemma: pairs of alternating endpoints} there is $h\in\Homeo(X)$ with $h(e_1)=f_1$ and $h(e_2)=f_2$. Since $h(x)\in[f_1,f_2]$ we have $d(h(x),z)\leq\varepsilon/3$, as needed.
\end{proof}

\begin{thm}
    The action $\Homeo(X)\curvearrowright X$ is not $\tame_1$.
\end{thm}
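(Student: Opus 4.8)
The plan is to apply Proposition \ref{prop: Ellis}, exactly as was done for the \Wazewski dendrites. We already know from Proposition \ref{prop: the action of Homeo(X) on X is minimal} that the system $\Homeo(X)\curvearrowright X$ is minimal, so the two ingredients that remain are (a) proximality and (b) the existence of an alternating endpoint $a$ together with uncountably many endpoints $b$ such that $p_{a,b}\in E(X,\Homeo(X))$. Since by Lemma \ref{lemma: uncountably many endpoints of each type} there are uncountably many alternating endpoints, fixing one alternating endpoint $a$ and letting $b$ range over the remaining alternating endpoints produces the uncountable family $\{b_i\}$ required by Proposition \ref{prop: Ellis}, whence $E(X,\Homeo(X))$ is not first countable and the system is not $\tame_1$.

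For proximality, given $x\neq y$ and $\varepsilon>0$ I would first choose endpoints $p,q$ of $X$ with $x,y\in[p,q]$; such endpoints exist because every arc in a dendrite extends to a maximal arc whose endpoints are endpoints of the dendrite, and whenever $x$ (resp.\ $y$) is not itself an endpoint I may take $p$ (resp.\ $q$) to be an alternating endpoint lying in a component of $X\setminus\{x\}$ (resp.\ $X\setminus\{y\}$) not containing the other point. I would then pick endpoints $p',q'$ of the same respective types as $p,q$ with $\diam([p',q'])<\varepsilon$, which is possible since endpoints of each type are dense, and use the type-matching back-and-forth underlying Lemma \ref{lemma: pairs of alternating endpoints} to produce $h\in\Homeo(X)$ with $h(p)=p'$ and $h(q)=q'$. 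As $h$ maps $[p,q]$ onto $[p',q']$, both $h(x)$ and $h(y)$ land in $[p',q']$ and are therefore $\varepsilon$-close. The only point requiring care is that $p,q$ need not be alternating (this happens precisely when $x$ or $y$ is a red or green endpoint), so I need the mild extension of Lemma \ref{lemma: pairs of alternating endpoints} already used in Proposition \ref{prop: the action of Homeo(X) on X is minimal}, which lets one prescribe the images of finitely many endpoints provided their types are respected.

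The heart of the argument is (b), the analogue of Lemma \ref{lemma: p_ab is in E(X,G)}. Fix alternating endpoints $a\neq b$, a finite set $\{x_1,\dots,x_n\}$ and $\varepsilon>0$; I want $h\in\Homeo(X)$ with $h(b)=b$ and $d(h(x_i),a)<\varepsilon$ for every $x_i\neq b$. Following the \Wazewski proof I would choose, on the arc $[a,b]$, a regular point $z$ close to $a$ with $\diam(C_z(a))<\varepsilon$ and a regular point $y$ close to $b$ with $C_y(b)\cap\{x_1,\dots,x_n\}\subseteq\{b\}$, so that every $x_i\neq b$ lies in $C_y(a)$. I would then glue two homeomorphisms $\varphi_1\colon C_y(a)\to C_z(a)$ and $\varphi_2\colon C_y(b)\to C_z(b)$ with $\varphi_1(a)=a$, $\varphi_2(b)=b$ and $\varphi_1(y)=z=\varphi_2(y)$. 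The resulting $h=\varphi_1\cup\varphi_2$ fixes $a$ and $b$ and sends the whole $a$-side of $y$, which contains every $x_i\neq b$, into the $\varepsilon$-small set $C_z(a)$, as required.

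The main obstacle, and the essential difference from the \Wazewski case, is producing the homeomorphisms $\varphi_1$ and $\varphi_2$. In $W_P$ every component $C_x(y)$ is homeomorphic to the whole dendrite, so these maps are immediate; in $X$ this fails, and instead I must establish a self-similarity statement for the ends of $X$ at an alternating endpoint: if $a$ is an alternating endpoint and $z\in[a,y]$, then $C_y(a)\cong C_z(a)$ via a homeomorphism carrying $a$ to $a$ and $y$ to $z$ (and symmetrically for $b$). Since homeomorphisms automatically preserve the order of ramification points, this reduces to matching the coloured branching patterns of $C_y(a)$ and $C_z(a)$: because $a$ is alternating, both colours occur infinitely often along the arc towards $a$ and the subtrees hanging off each ramification point are built by the same recursive rule, so a type-preserving back-and-forth of the kind used in Lemma \ref{lemma: pairs of alternating endpoints} should yield the homeomorphism. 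Verifying that these two cut ends are genuinely homeomorphic, i.e.\ that the construction is self-similar at alternating endpoints, is where the real work lies; once it is in place, the gluing, the uncountability of $\{b_i\}$, and Proposition \ref{prop: Ellis} complete the proof.
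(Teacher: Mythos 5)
Your overall skeleton is the same as the paper's: apply Proposition \ref{prop: Ellis} with an uncountable family of idempotents $p_{a,b}$ indexed by alternating endpoints $b$, using Lemma \ref{lemma: uncountably many endpoints of each type} for uncountability. (For proximality the paper simply cites Duchesne--Monod's theorem that minimal dendrite actions are strongly proximal, combined with Proposition \ref{prop: the action of Homeo(X) on X is minimal}; your direct argument is not needed, and as written it also quietly assumes that any two arcs between endpoints of matching types admit a colour-preserving homeomorphism, which is exactly the kind of statement one should be cautious about in this dendrite.)

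The genuine gap is in step (b). You transplant the \Wazewski two-piece gluing, which forces you to prove that $C_y(a)\cong C_z(a)$ via a homeomorphism carrying $a\mapsto a$ and $y\mapsto z$ --- and you explicitly leave this unproved. This is not a routine verification: as stated it is false without further restrictions (if $y$ lies in a red block and $z$ in a green block of $[a,b]$, no such homeomorphism exists, since $y$ and $z$ become endpoints of different types of the cut pieces), and even after matching the colours of $y$ and $z$ one must still show that the coloured block patterns of the arcs $[y,a]$ and $[z,a]$ are isomorphic. The paper avoids this entirely by a different construction of $p_{a,b}$: it fixes sequences of ramification points $a_n\to a$ and $b_n\to b$ on $[a,b]$, with $a_n$ and $b_n$ of the \emph{same order} for each $n$, and uses the back-and-forth of Lemma \ref{lemma: pairs of alternating endpoints} (with $e_1=f_1=a$, $e_2=f_2=b$, $I_1=J_1=[a,b]$) to produce genuine homeomorphisms $h_n$ fixing $a$ and $b$ with $h_n(b_n)=a_n$. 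Since any fixed $x\neq b$ eventually lies in $C_{b_n}(a)$ and is therefore sent into $C_{a_n}(a)$, whose diameter tends to $0$, one gets $h_n\to p_{a,b}$ pointwise. That route only ever requires the arc-matching already packaged into Lemma \ref{lemma: pairs of alternating endpoints}, and no self-similarity of cut ends. To complete your proof you would either have to carry out the self-similarity analysis you flagged (including the colour bookkeeping above), or switch to the paper's limit-of-homeomorphisms construction.
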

\begin{proof}
    We want to apply Proposition \ref{prop: Ellis} again, so we first need to verify that the action of $\Homeo(X)$ on $X$ is proximal. By Theorem 10.1 of \cite{duchesne2018group} any minimal action on a dendrite is (strongly) proximal which, together with Proposition \ref{prop: the action of Homeo(X) on X is minimal} shows that $\Homeo(X)\curvearrowright X$ is proximal. Now given $a,b$ alternating endpoints in $X$, we will show that the idempotent $p_{a,b}$ belongs to $E(X,\Homeo(X))$, which, in conjunction with Lemma \ref{lemma: uncountably many endpoints of each type}, is enough to conclude in the same fashion as in the proof of Theorem \ref{thm: Wazewski is not tame1}. Fix $a,b$ alternating endpoints in $X$ and fix sequences $(a_n)_{n\in\omega}$ and $(b_n)_{n\in\omega}$ such that 
    \begin{itemize}
        \item For all $n$, both $a_n$ and $b_n$ are ramification points of $X$.
        \item For all even $n$, both $a_n$ and $b_n$ have order $4$, while for all odd $n$, both $a_n$ and $b_n$ have order $3$.
        \item We have $\lim_{n\to\infty}a_n=a$ and $\lim_{n\to\infty}b_n=b$.
        \item For all $n$, $a,a_n,b_n$ and $b$ appear in $[a,b]$ in this order.
    \end{itemize}
    We claim that for all $n$ there exists a homeomorphism $h_n\colon X\to X$ such that $h_n(a)=a$, $h_n(b)=b$, $h_n(b_n)=a_n$, indeed this can be constructed as in the proof on Lemma \ref{lemma: pairs of alternating endpoints} by letting $e_1=f_1=a$, $e_2=f_2=b$ so that $I_1=J_1=[a,b]$, and choosing the homeomorphism $h\colon I_1\to J_1$ in the first step so that $h(b_n)=a_n$, which is clearly possible. We now have $p_{a,b}=\lim h_n$ (in the pointwise convergence topology), showing that $p_{a,b}\in E(X,\Homeo(X))$, concluding the proof.
\end{proof}

\section{Actions on dendrites cannot have high \texorpdfstring{$\beta$}{beta}-rank}
\label{section: actions on dendrites have low beta-rank}
The notion of $\beta$-rank of a dynamical system was introduced by Glasner and Megrelishvili in \cite{GM}, based on the definition of $\beta$-rank for a real-valued Baire class 1 function from Bourgain \cite{Bourgain}. It follows from results of Kechris and Louveau in \cite{KechrisLouveau} that $\beta(X,G)\leq\omega_1$ for a metric tame system, but all the examples given in \cite{GM} have $\beta$-rank at most $2$ and the authors asked for examples of tame metric systems with higher $\beta$-rank. In this section we show that if $(X,G)$ is a dynamical system, with $X$ a dendrite, then $\beta(X,G)\leq 2$. In the following section we produce, for every $\alpha<\omega_1$, a metric tame dynamical system $(X_\alpha,G_\alpha)$ with $\beta(X_\alpha,G_\alpha)=\alpha$.

\begin{defn}
Let $(X,G)$ be a metric tame dynamical system. For $p\in E(X,G)$ define the \emph{oscillation of $p$ at $x$ to be} $$\osc(p,x)=\inf\{\diam(pV)\mid V \text{is an open neighbourhood of }x\},$$
and relativize this notion to subsets $A\subseteq X$ with $x\in A$ by setting $$\osc(p,x,A)=\osc(p\upharpoonright A,x).$$
Given $\varepsilon>0$ define a derivative operation $$A\mapsto A^1_{\varepsilon,p}=\{x\in X\mid \osc(p,x,A)\geq\varepsilon\},$$
and iterating through countable ordinals define $A^{\alpha}_{\varepsilon,p}$ for all $\alpha<\omega_1$, where $A^\gamma_{\varepsilon,p}=\bigcap_{\beta<\gamma}A^\beta_{\varepsilon,p}$ for a limit ordinal $\gamma$.
Let $$\beta(p,\varepsilon,A)=\begin{cases}\text{the least $\alpha$ with } A^{\alpha}_{\varepsilon,p}=\varnothing & \text{if such an $\alpha$ exists}\\
\omega_1 & \text{otherwise}
\end{cases}.$$
Define $\beta(p,\varepsilon)=\beta(p,\varepsilon,X)$ and let the \emph{oscillation rank} of $p$ be $$\beta(p)=\sup_{\varepsilon>0}\beta(p,\varepsilon).$$

Finally let $$\beta(X,G)=\sup\{\beta(p)\mid p\in E(X,G)\}.$$
\end{defn}

We want to show that if $X$ is a dendrite, $G$ is a topological group acting continuously on $X$, $\varepsilon>0$ and $f\in E(X,G)$, then  $X^2_{\varepsilon,f}=\varnothing$. We will show a stronger statement, namely that $X^1_{\varepsilon,f}$ is finite hence discrete, from which the desired conclusion follows immediately. Clearly it suffices to consider the $G=\Homeo(X)$ case.

\begin{lemma}\label{lemma: elements of E(X,G) preserve betweenness}
    Let $(X,G)$ be a dynamical system, where $X$ is dendrite. Any $f\in E(X,G)$ preserves the betweenness relation on $X$, meaning that \begin{equation}z\in[x,y]\implies f(z)\in[f(x),f(y)].\label{eq: preserving betweenness}\end{equation}
\end{lemma}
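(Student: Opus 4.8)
The plan is to exploit that the conclusion holds trivially for homeomorphisms and that it is inherited by pointwise limits, because betweenness is a closed relation. As the excerpt already reduces to $G=\Homeo(X)$, I begin by recording the base case: any $g\in\Homeo(X)$ preserves betweenness, since $g$ carries the unique arc $[x,y]$ homeomorphically onto an arc joining $g(x)$ and $g(y)$, which by uniqueness of arcs in a dendrite must be $[g(x),g(y)]$; hence $z\in[x,y]$ forces $g(z)\in[g(x),g(y)]$.

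The key lemma, and the step I expect to be the main obstacle, is that the \emph{betweenness set}
\[
B=\{(x,z,y)\in X^3\mid z\in[x,y]\}
\]
is closed in $X^3$. To prove this I would use the median (center) map $m\colon X^3\to X$, where $m(x,y,z)$ is the unique point lying in $[x,y]\cap[y,z]\cap[z,x]$; its existence and uniqueness are standard features of dendrites. The relevant observation is that $z\in[x,y]$ if and only if $m(x,y,z)=z$: if $z\in[x,y]$ then $z$ lies on all three arcs and so equals the median, and conversely $m(x,y,z)=z$ gives $z\in[x,y]$ directly. Consequently
\[
B=\{(x,z,y)\in X^3\mid m(x,y,z)=z\}
\]
is the preimage of the (closed) diagonal of $X^2$ under the map $(x,z,y)\mapsto(m(x,y,z),z)$, so $B$ is closed as soon as $m$ is continuous. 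The continuity of $m$ is the genuinely nontrivial input; it follows from the structure theory of dendrites (the first-point/nearest-point retractions onto subcontinua are continuous, cf.\ \cite{Nadler}), and I would either cite this or, alternatively, derive closedness of $B$ from the closed root-order $x\le y\iff x\in[r,y]$ mentioned in the introduction, by expressing betweenness through that order and the meet operation $x\wedge y$ (using $z\in[x,y]\iff x\wedge y\le z$ and $(z\le x$ or $z\le y)$), which again requires continuity of the meet.

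Finally I would conclude by passing to the limit. Since $E(X,G)$ is the pointwise closure of $G$ in $X^X$, any $f\in E(X,G)$ is the pointwise limit of a net $(g_\lambda)$ in $\Homeo(X)$. Fix $x,y$ and $z\in[x,y]$. By the base case $(g_\lambda(x),g_\lambda(z),g_\lambda(y))\in B$ for every $\lambda$, and pointwise convergence gives $(g_\lambda(x),g_\lambda(z),g_\lambda(y))\to(f(x),f(z),f(y))$ in $X^3$. Because $B$ is closed, the limit lies in $B$, i.e.\ $f(z)\in[f(x),f(y)]$, which is exactly \eqref{eq: preserving betweenness}. This completes the argument once the closedness of $B$ is established.
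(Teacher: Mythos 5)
Your argument is correct and follows essentially the same route as the paper's primary proof: the paper deduces the lemma immediately from the fact that the betweenness relation is closed in $X^3$ (citing \cite[Proposition 19.9]{GMtrees}), exactly as you do in your limit step, and its alternative direct proof (showing the set of non-preserving functions is open in $X^X$) is just the same closedness fact phrased contrapositively. The only difference is that where the paper cites the closedness, you re-derive it from the continuity of the median map $m\colon X^3\to X$; that equivalence $z\in[x,y]\iff m(x,y,z)=z$ and the resulting closedness of $B$ are correct, and the continuity of $m$ is indeed the one external input you would need to cite (it is standard for dendrites, e.g.\ via the first-point retraction in \cite{Nadler} or the center map in \cite{duchesne2018group}).
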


By \cite[Proposition 19.9]{GMtrees} the betweenness relation on a dendrite is closed in $X^2$, from which the above lemma follows immediately. We give an alternative direct proof below.

\begin{proof}
    We show that the set of functions not satisfying \eqref{eq: preserving betweenness} is open in $X^X$. Since the elements of $G$ clearly satisfy \eqref{eq: preserving betweenness} and $E(X,G)$ is the pointwise closure of $G$ in $X^X$, this is enough to finish the proof. Let $g\in X^X$ be a function not satisfying \eqref{eq: preserving betweenness}, suppose for example that there are $x,y,z\in X$ with $x\in[y,z]$ but $g(x)\not\in[g(y),g(z)]$. Let $\varepsilon>0$ be small enough so that \begin{align*}
        B_\varepsilon(g(x))\cap[g(y),g(z)]&=\varnothing\\
    \end{align*}
    and the analgous intersection is empty when swapping the roles of $x,y,z$. Any $h\in X^X$ which is $\varepsilon$-close to $g$ on $x,y,z$ will also satisfy $h(x)\not\in[h(y),h(z)]$. 
\end{proof}

Since $X$ is a dendrite, given any $\varepsilon>0$ fix a finite open cover $\mathcal U_\varepsilon=\{U_0,\ldots,U_{n(\varepsilon)}\}$ such that (1) $\diam(U_i)\leq\varepsilon/4$ for all $i$. (2) $\partial U_i$ is finite for every $i$ and (3) every $U_i$ is convex, meaning that $x,y\in U_i\implies[x,y]\subseteq U_i$.

\begin{defn}
    Let $x\in X$, $f\in E(X,G)$, $\varepsilon>0$ and $i$ be such that $f(x)\in U_i$. Given $y\in\partial U_i$ we say that $f$ is  \emph{$\varepsilon$-oscillating in the $y$-direction at $x$} if for every open neighbourhood $V$ of $x$, there exists $x^V\in V$ such that \begin{itemize}
        \item $d(f(x),f(x^V))\geq\varepsilon$,
        \item $y\in[f(x),f(x^V)].$
    \end{itemize}
\end{defn}
Note that it is possible for an $f\in E(X,G)$ to be $\varepsilon$-oscillating in many directions at the same time at some $x\in X$.

\begin{lemma}\label{lemma: being in first derivative implies oscillation}
    If $x\in X^1_{\varepsilon,f}$ and $f(x)\in U_i$, then there is at least one $y\in \partial U_i$ such that $f$ is $\varepsilon/2$-oscillating in the $y$-direction at $x$.    
\end{lemma}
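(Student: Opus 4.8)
The plan is to extract, from the hypothesis $x\in X^1_{\varepsilon,f}$, a single boundary point $y\in\partial U_i$ that witnesses $\varepsilon/2$-oscillation for \emph{every} neighbourhood of $x$ at once. Since $X$ is a dendrite it is metrizable, so I would first fix a decreasing countable neighbourhood basis $(V_n)_{n\in\omega}$ at $x$. The assumption $x\in X^1_{\varepsilon,f}$ means exactly that $\osc(f,x,X)\geq\varepsilon$, i.e.\ $\diam(fV_n)\geq\varepsilon$ for every $n$. The first step is to produce, for each $n$, a point $z_n\in V_n$ whose image is far from $f(x)$: passing to the closure, $\overline{fV_n}$ is compact of diameter $\geq\varepsilon$, so it contains two points at distance $\geq\varepsilon$, and by the triangle inequality one of them lies at distance $\geq\varepsilon/2$ from $f(x)$; approximating it by points of $fV_n$ gives $z_n\in V_n$ with $d(f(x),f(z_n))\geq\varepsilon/2$. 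Since $\diam(U_i)\leq\varepsilon/4<\varepsilon/2$ and $f(x)\in U_i$, such a point necessarily has $f(z_n)\notin U_i$.

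Next I would use the tree structure. Consider the unique arc $\gamma=[f(x),f(z_n)]$, parametrised with $\gamma(0)=f(x)\in U_i$ and $\gamma(1)=f(z_n)\notin U_i$, and set $t_n=\sup\{t:\gamma([0,t))\subseteq U_i\}$. Because $U_i$ is open and $\gamma(0)\in U_i$ we have $t_n>0$, and $\gamma(t)\in U_i$ for all $t<t_n$, so $y_n:=\gamma(t_n)\in\overline{U_i}$; on the other hand $y_n\notin U_i$, since otherwise openness of $U_i$ would let $\gamma$ stay in $U_i$ past $t_n$. Hence $y_n\in\partial U_i$ and, being a point of $\gamma$, it satisfies $y_n\in[f(x),f(z_n)]$ (here the convexity of the $U_i$ makes the picture of a single clean exit point transparent, though for this lemma all that is needed is that the arc from inside the open set $U_i$ to its exterior meets $\partial U_i$). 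Now $\partial U_i$ is finite, so by the pigeonhole principle there is a single $y\in\partial U_i$ with $y_n=y$ for all $n$ in some infinite set $S\subseteq\omega$.

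Finally I would check that this $y$ works universally. Given an arbitrary neighbourhood $V$ of $x$, choose $N$ with $V_N\subseteq V$ and then $n\in S$ with $n\geq N$; the point $x^V:=z_n$ lies in $V_n\subseteq V_N\subseteq V$, satisfies $d(f(x),f(x^V))\geq\varepsilon/2$, and has $y\in[f(x),f(x^V)]$. By definition this says exactly that $f$ is $\varepsilon/2$-oscillating in the $y$-direction at $x$, proving the lemma. The main obstacle is conceptual rather than computational: one must pass from the \emph{a priori} $V$-dependent exit directions $y_n$ to one fixed direction valid for all neighbourhoods, and the two ingredients making this possible are the finiteness of $\partial U_i$ (forcing repetition) and the cofinality of the countable basis $(V_n)$ in the neighbourhood filter of $x$ (letting a choice made along a subsequence serve every $V$). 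A secondary technical point to handle carefully is the passage from $\diam(fV_n)\geq\varepsilon$ to an actual point $z_n$ realising distance $\geq\varepsilon/2$ from $f(x)$, where the degenerate equality case is absorbed using compactness of $\overline{fV_n}$ together with the strict gap $\diam(U_i)\leq\varepsilon/4$.
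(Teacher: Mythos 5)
Your proposal is correct and follows essentially the same route as the paper's proof: use the triangle inequality to extract, in each neighbourhood of $x$, a point whose image is at distance $\geq\varepsilon/2$ from $f(x)$ (hence outside $U_i$), observe that the arc from $f(x)$ to that image must cross the finite set $\partial U_i$, and apply the pigeonhole principle to fix a single exit direction $y$. Your version merely routes the argument through a countable decreasing neighbourhood basis and is somewhat more explicit about locating the exit point and about the attainment of the diameter, but the substance is identical.
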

\begin{proof}
    Since $x\in X^1_{\varepsilon,f}$, for every open neighbourhood $V$ of $x$ there are $z^V,y^V\in V$ with $d(f(z^V),f(y^V))\geq\varepsilon$. By the triangle inequality we must have either $d(f(x),f(z^V))\geq\varepsilon/2$ or $d(f(x),f(y^V))\geq\varepsilon/2$. In the first case let $x^V=z^V$ while in the latter case let $x^V=y^V$. Then $f(x^V)\not\in U_i$ and, for some $w\in\partial U_i$, $w\in[f(x),f(x^V)]$. By the pigeonhole principle, since $\partial U_i$ is finite, there must at least one $y\in\partial U_i$ so that for arbitrarily small open $V\ni x$, $y\in[f(x),f(x^V)]$, so that $f$ is $\varepsilon/2$-oscillating in the $y$-direction at $x$. 
\end{proof}

\begin{lemma}\label{lemma: oscillation must be in different directions}
    If $x_1\neq x_2\in X^1_{\varepsilon,f}$ and $i$ are such that $f(x_1),f(x_2)\in U_i$, then $f$ cannot be $\varepsilon/2$-oscillating in the same direction at $x_1$ and $x_2$.
\end{lemma}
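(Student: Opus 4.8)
The plan is to argue by contradiction. Suppose $x_1\neq x_2$ both lie in $X^1_{\varepsilon,f}$ with $f(x_1),f(x_2)\in U_i$, and suppose toward a contradiction that there is a single $y\in\partial U_i$ in whose direction $f$ is $\varepsilon/2$-oscillating at both $x_1$ and $x_2$. The first observation I would record is that $y$ behaves as a cut point to one side of which both images are trapped. Indeed, since $U_i$ is convex and $y\in\partial U_i$ means $y\notin U_i$, every arc between two points of $U_i$ avoids $y$; hence all of $U_i$, and in particular $f(x_1)$ and $f(x_2)$, lies in a single component $C^*$ of $X\setminus\{y\}$.

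Next I would exploit the arc between $x_1$ and $x_2$ in the domain. Fix an interior point $m\in(x_1,x_2)$ (which exists since $x_1\neq x_2$), and let $V_0$ and $W_0$ be the components of $X\setminus\{m\}$ containing $x_1$ and $x_2$ respectively; these are disjoint open neighbourhoods with the property that any arc joining a point of $V_0$ to a point of $W_0$ passes through $m$. Applying the definition of $\varepsilon/2$-oscillation in the $y$-direction at $x_1$ to the neighbourhood $V_0$ produces $p\in V_0$ with $y\in[f(x_1),f(p)]$ and $d(f(x_1),f(p))\geq\varepsilon/2$; symmetrically, oscillation at $x_2$ applied to $W_0$ gives $q\in W_0$ with $y\in[f(x_2),f(q)]$ and $d(f(x_2),f(q))\geq\varepsilon/2$. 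Because $\diam(\overline{U_i})=\diam(U_i)\leq\varepsilon/4$, the bound $d(f(x_1),f(p))\geq\varepsilon/2$ forces $f(p)\neq y$ (as $d(f(x_1),y)\leq\diam(\overline{U_i})\leq\varepsilon/4$), and similarly $f(q)\neq y$. Since $y$ is then an interior point of both $[f(x_1),f(p)]$ and $[f(x_2),f(q)]$, the points $f(p)$ and $f(q)$ lie in components of $X\setminus\{y\}$ different from $C^*$.

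To close the argument I would feed the betweenness-preservation of $f$ (Lemma \ref{lemma: elements of E(X,G) preserve betweenness}) into this configuration. On one hand, $m\in[x_1,x_2]$ gives $f(m)\in[f(x_1),f(x_2)]\subseteq U_i\subseteq C^*$; on the other hand, $m\in[p,q]$ (since $p\in V_0$, $q\in W_0$) gives $f(m)\in[f(p),f(q)]$. It remains to see that $[f(p),f(q)]$ cannot meet $C^*$, which is the heart of the matter and where a short case analysis is needed: if $f(p)$ and $f(q)$ lie in the same component of $X\setminus\{y\}$, then the whole arc $[f(p),f(q)]$ stays in that component (it cannot contain the cut point $y$), while if they lie in different components, then $y\in[f(p),f(q)]$ and the arc is contained in the closures of those two components; in either case $[f(p),f(q)]$ is disjoint from $C^*$. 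This contradicts $f(m)\in[f(p),f(q)]\cap C^*$, completing the proof. I expect the main subtlety to be exactly this final step, namely checking that oscillating ``beyond $y$'' at two distinct points genuinely pushes the connecting arc entirely off the near side $C^*$; the convexity and small diameter of the sets $U_i$ (which guarantee both $f(p),f(q)\notin C^*$ and $f(m)\in C^*$) are what make this work.
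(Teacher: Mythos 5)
Your proof is correct and follows essentially the same strategy as the paper's: both arguments use betweenness preservation together with the convexity and small diameter of $U_i$ to produce a point of $[x_1,x_2]$ whose image must lie both in $U_i$ and on the arc joining the images of the two witnesses, an arc which is forced entirely beyond the cut point $y$ and hence off $U_i$. Your choice of the two neighbourhoods as the components of $X\setminus\{m\}$ for a fixed interior point $m$ of $(x_1,x_2)$ is a pleasant streamlining that collapses the paper's three-case analysis on the median of $x_1$, $x_2$, $x_1^V$ into a single uniform argument.
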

\begin{proof}
    Suppose for a contradiction that there is $y\in\partial U_i$ such that $f$ is $\varepsilon/2$-oscillating in the $y$-direction at both $x_1$ and $x_2$. Let $V$ be an open neighbourhood of $x_1$ with $x_2\not\in V$ and let $x_1^V$ witness that $f$ is $\varepsilon/2$-oscillating in the $y$ direction at $x_1$. Let $w$ be the midpoint of $x_1,x_2$ and $x_1^V$, meaning that $$w=[x_1,x_2]\cap[x_1,x_1^V]\cap[x_1^V,x_2].$$
    We distinguish three cases, based on the position of $w$ relative to $x_1,x_2,x_1^V$:
    \begin{itemize}
        \item[Case 1:] $w=x_1^V$. This implies that $x_1^V\in[x_1,x_2]$. Since $f$ preserves betweenness, we obtain $$f(x_1^V)\in[f(x_1),f(x_2)]\subseteq U_i,$$ where the last inclusion holds by convexity of $U_i$. Together with $\diam(U_i)\leq\varepsilon/4$, this contradicts that $x_1^V$ witnesses that $f$ is $\varepsilon/2$-oscillating in the $y$ direction at $x_1$.
        \item[Case 2:] $w=x_1$. Let $U$ be an open neighbourhood of $x_2$ small enough to have $U\cap V=\varnothing$, and let $x_2^U$ witness that $f$ is $\varepsilon/2$-oscillating in the $y$ direction at $x_2$. Since  $x_1\in[x_1^V,x_2]$ by assumption, we also have $x_1\in[x_1^V,x_2^U]$, which implies $f(x_1)\in[f(x_1^V),f(x_2^U)]$. This is a contradiction since $f(x_1)\in U_i$ while the arc $[f(x_1^V),f(x_2^U)]$ is entirely outside of $U_i$ (since $x_1^V$ and $x_2^U$ are witnesses for the $\varepsilon/2$-oscillation of $f$ at $x_1$, $x_2$ respectively, this arc can at most go through $y$, but must remain outside of $U_i$.)     
        \item[Case 3:] $w\in[x_1,x_2]$ but $w\neq x_1^V,x_1$. Let $U$ be an open neighbourhood of $x_2$ small enough to have $U\cap V=\varnothing$, and let $x_2^U$ witness that $f$ is $\varepsilon/2$-oscillating in the $y$ direction at $x_2$. By construction we have $w\in[x_1^V,x_2^U]$, which implies $f(w)\in[f(x_1^V),f(x_2^U)]$. This is a contradiction, as we have $f(w)\in U_i$ since $f$ preserves betweenness, while the arc $[f(x_1^V),f(x_2^U)]$ is entirely outside of $U_i$ as in the previous case.
    \end{itemize}
\end{proof}

Putting together the previous lemmas we finally obtain 
\begin{thm}\label{thm: actions on dendrites have low beta-rank}
    Let $f\in E(X,G)$. Then $X^1_{\varepsilon,f}$ is finite. In particular $\beta(f)\leq 2$ and $\beta(X,G)\leq 2$.
\end{thm}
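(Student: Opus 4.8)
The plan is to combine the three preceding lemmas to produce a uniform finite bound on the cardinality of $X^1_{\varepsilon,f}$. First I would fix $\varepsilon>0$ and the associated finite convex cover $\mathcal U_\varepsilon=\{U_0,\ldots,U_{n(\varepsilon)}\}$ with all the properties guaranteed in its construction, most importantly that each $\partial U_i$ is finite and each $U_i$ is convex with $\diam(U_i)\leq\varepsilon/4$. The key observation is that every point $x\in X^1_{\varepsilon,f}$ gets routed into exactly one cell $U_i$ via the value $f(x)$, so it suffices to bound the number of points of $X^1_{\varepsilon,f}$ landing in each fixed $U_i$ and then sum over the finitely many cells.

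Fix a cell $U_i$ and consider $S_i=\{x\in X^1_{\varepsilon,f}\mid f(x)\in U_i\}$. By Lemma \ref{lemma: being in first derivative implies oscillation}, each $x\in S_i$ is $\varepsilon/2$-oscillating in the $y$-direction at $x$ for at least one $y\in\partial U_i$; assign to each $x\in S_i$ such a witnessing direction $y=y(x)\in\partial U_i$. By Lemma \ref{lemma: oscillation must be in different directions}, no direction $y\in\partial U_i$ can be the witness for two distinct points of $S_i$, so the assignment $x\mapsto y(x)$ is injective. Hence $|S_i|\leq|\partial U_i|$, which is finite. Summing, $|X^1_{\varepsilon,f}|\leq\sum_{i=0}^{n(\varepsilon)}|\partial U_i|<\infty$, so $X^1_{\varepsilon,f}$ is finite.

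A finite subset of $X$ is discrete, and in particular has empty set of oscillation-$\varepsilon$ points when we apply the derivative a second time: since $X^1_{\varepsilon,f}$ is finite, for any $x\in X^1_{\varepsilon,f}$ we can find an open neighbourhood $V$ meeting $X^1_{\varepsilon,f}$ only in $x$, so $\osc(f,x,X^1_{\varepsilon,f})=0<\varepsilon$. Thus $X^2_{\varepsilon,f}=(X^1_{\varepsilon,f})^1_{\varepsilon,f}=\varnothing$, which gives $\beta(f,\varepsilon)\leq 2$. As this bound is independent of $\varepsilon$, we obtain $\beta(f)=\sup_{\varepsilon>0}\beta(f,\varepsilon)\leq 2$, and since $f\in E(X,G)$ was arbitrary, $\beta(X,G)=\sup_{p\in E(X,G)}\beta(p)\leq 2$.

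The conceptual work is entirely contained in the two oscillation lemmas, which I would treat as black boxes here; the only genuine obstacle is the bookkeeping at the second-derivative step, namely confirming that a finite first derivative forces the second derivative to be empty. This is where the reduction to the relativized oscillation $\osc(f,x,X^1_{\varepsilon,f})$ matters: one must check that the derivative operation in the second iteration is taken relative to $A=X^1_{\varepsilon,f}$, so that the relevant neighbourhoods are intersected with this finite set and the oscillation there is trivially zero. I do not expect any difficulty beyond this, since finiteness of each $\partial U_i$ and the injectivity supplied by Lemma \ref{lemma: oscillation must be in different directions} do all the heavy lifting.
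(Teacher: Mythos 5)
Your proof is correct and follows the same route as the paper: Lemma \ref{lemma: being in first derivative implies oscillation} assigns each point of $X^1_{\varepsilon,f}$ landing in $U_i$ a witnessing direction in the finite set $\partial U_i$, Lemma \ref{lemma: oscillation must be in different directions} makes that assignment injective, and summing over the finite cover gives finiteness, from which $X^2_{\varepsilon,f}=\varnothing$ follows. The only difference is that you spell out the (correct, and implicit in the paper) step that a finite set has vanishing relativized oscillation at each of its points.
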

\begin{proof}
    By Lemma \ref{lemma: being in first derivative implies oscillation} if $x\in X^1_{\varepsilon,f}$ and $f(x)\in U_i$, then $f$ is $\varepsilon/2$-oscillating in the $y$-direction for some $y\in\partial U_i$. Since $\partial U_i$ is finite there can only be finitely many $x\in X^1_{\varepsilon,f}$ with $f(x)\in U_i$ by Lemma \ref{lemma: oscillation must be in different directions}, so that $X^1_{\varepsilon,f}$ is finite since $\U_\varepsilon$ is finite. Since $\varepsilon$ was arbitrary we have $\beta(f)\leq 2$ and since $f$ was arbitrary we have $\beta(X,G)\leq 2.$
\end{proof}

\section{Metric tame systems with arbitrary \texorpdfstring{$\beta$}{beta}-rank}
\label{section: systems with arbitrary beta-rank}
In this section we build, for every $\alpha<\omega_1$, a tame metric system $(X_\alpha,G_\alpha)$ with $\beta(X_\alpha,G_\alpha)=\alpha$, answering Question 11.8 of \cite{GM}. The intuitive idea is that $X_\alpha$ will be a set of Cantor-Bendixson rank $\alpha$, together with a continuous action of a group $G_\alpha$ constructed in such a way that for some $f\in E(X_\alpha,G_\alpha)$, the derivative operations used to compute the Cantor-Bendixson rank of $X_\alpha$ and the $\beta$-rank of $f$ coincide. Since the spaces $X_\alpha$ will be countable, the dynamical system $(X_\alpha,G_\alpha)$ is hereditarily nonsensitive by \cite[Corollary 10.2]{GMHNSsystems}, a property strictly stronger than tameness. We begin by recalling some basic facts about the Cantor-Bendixson rank.

\begin{defn}
    Let $X$ be a topological space. Define $$X^1=\{x\in X\mid x\text{ is a limit point of }X\}$$ and recursively define, for ordinals $\alpha$, 
    \begin{align*}
        X^{\alpha+1}&=(X^\alpha)^1 \\
        X^\beta&=\bigcap_{\alpha<\beta}X^\alpha\quad\text{for limit $\beta$.}
    \end{align*}
    The least $\alpha$ for which $X^\alpha=X^{\alpha+1}$ is called the Cantor-Bendixson rank of $X$, denoted by $|X|_{CB}$.
\end{defn}

It is well known that if $X$ is a Polish (or more generally a second countable) space, then $|X|_{CB}<\omega_1$, and that for any $\alpha<\omega_1$, there is a compact countable $Y_\alpha\subseteq[0,1]$ with $|Y_\alpha|_{CB}=\alpha$, see \cite[Chapter 6.C]{kechrisDST} for more details.

Consider now the space $X_\alpha=Y_\alpha\times\{0,1\}$ (as a subspace of $[0,1]^2$), and define $G_\alpha=\Homeo(X_\alpha)$. With the compact-open topology, $G_\alpha$ is a Polish group with a natural continuous action $G_\alpha\curvearrowright X_\alpha$. Note that $|X_\alpha|_{CB}=|Y_\alpha|_{CB}=\alpha$.

\begin{lemma}
    For all $\alpha<\omega_1$, the dynamical system $(X_\alpha,G_\alpha)$ described above is tame.
\end{lemma}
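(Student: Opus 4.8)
The plan is to exploit the fact that $X_\alpha = Y_\alpha \times \{0,1\}$ is countable, compact, and metrizable, and to show that the Ellis semigroup $E(X_\alpha, G_\alpha)$ consists entirely of Baire class $1$ functions, which by the characterization in \cite[Theorem 6.3]{GlasnerMegrelishviliUspenskij} recalled in the introduction is equivalent to tameness for a metric system. First I would observe that since $X_\alpha$ is a countable compact metric space, it is in particular a Polish space, so the notion of Baire class $1$ function $X_\alpha \to X_\alpha$ makes sense. The key structural point is that $X_\alpha$ is countable, so any function $f \colon X_\alpha \to X_\alpha$ has at most countably many points of discontinuity; more to the point, I want to argue that every element of the pointwise closure $E(X_\alpha, G_\alpha) = \overline{G_\alpha} \subseteq X_\alpha^{X_\alpha}$ is automatically Baire class $1$.

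The cleanest route I expect to take is the following. Since $X_\alpha$ is a countable metric space, every function $f \colon X_\alpha \to X_\alpha$ is a pointwise limit of a sequence of continuous functions provided $f$ has a point of continuity on every nonempty closed subset; but I can sidestep delicate descriptive set theory by instead invoking directly that a pointwise limit of a \emph{sequence} of continuous functions is Baire class $1$, and arguing that $E(X_\alpha, G_\alpha)$ coincides with the set of such sequential limits. Because $X_\alpha$ is second countable and $G_\alpha = \Homeo(X_\alpha)$ acts by homeomorphisms (hence continuous functions), and because the product space $X_\alpha^{X_\alpha}$ restricted to a countable index set $X_\alpha$ is metrizable on the relevant subspace, every point of $E(X_\alpha, G_\alpha)$ in the closure of the countable-domain product can be realized as the limit of a \emph{sequence} from $G_\alpha$ rather than merely a net. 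Concretely, since $X_\alpha$ is countable the pointwise convergence topology on $X_\alpha^{X_\alpha}$ is that of a countable product of compact metric spaces, hence metrizable, so $E(X_\alpha, G_\alpha)$ is a compact metrizable space and every $f \in E(X_\alpha, G_\alpha)$ is a sequential limit $f = \lim_n g_n$ with $g_n \in G_\alpha$ continuous. A pointwise limit of a sequence of continuous functions between metric spaces is Baire class $1$, so every $f \in E(X_\alpha, G_\alpha)$ is Baire class $1$, and tameness follows.

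I would then conclude by citing the equivalence: by \cite[Theorem 6.3]{GlasnerMegrelishviliUspenskij}, the metric dynamical system $(X_\alpha, G_\alpha)$ is tame precisely because every function in its Ellis semigroup is a Baire class $1$ function $X_\alpha \to X_\alpha$, which we have just verified. Alternatively, and perhaps more transparently, one can note that the metrizability of $E(X_\alpha, G_\alpha)$ already shows it is a first countable compactum, and combined with the Baire class $1$ property exhibits it as a Rosenthal compactum (it embeds in $B_1(X_\alpha, \mathbb{R})$ via composing with a countable separating family of continuous real functions), giving tameness directly from the Rosenthal compactum definition.

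The step I expect to be the main obstacle is the passage from nets to sequences, i.e.\ the justification that every element of the closure of $G_\alpha$ in $X_\alpha^{X_\alpha}$ is a \emph{sequential} limit of homeomorphisms rather than merely a limit of a net. This is exactly where countability of $X_\alpha$ is essential: it makes $X_\alpha^{X_\alpha}$ (with pointwise topology) metrizable, so closures are sequential closures and the Ellis semigroup is metrizable. Once that metrizability is in hand the rest is routine, since the classical theorem that a pointwise limit of a sequence of continuous maps into a metric space is of Baire class $1$ applies verbatim to $f \colon X_\alpha \to X_\alpha$ (viewing the target as a subspace of $[0,1]^2$). I would take care to state the metrizability argument explicitly, as it is the load-bearing observation for the whole lemma.
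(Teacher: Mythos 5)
Your proof is correct, but it takes a genuinely different and more roundabout route than the paper. The paper's argument is a one-liner: since $X_\alpha$ is countable, \emph{every} subset of $X_\alpha$ is $F_\sigma$ (a countable union of singletons), so \emph{every} function $f\colon X_\alpha\to X_\alpha$ whatsoever --- not just those in the Ellis semigroup --- satisfies the defining condition of Baire class $1$ given in the introduction, namely that $f^{-1}(U)$ is $F_\sigma$ for every open $U$; tameness then follows from \cite[Theorem 6.3]{GlasnerMegrelishviliUspenskij} exactly as you conclude. Your route instead establishes metrizability of $X_\alpha^{X_\alpha}$ (hence of $E(X_\alpha,G_\alpha)$) from countability of the index set, upgrades nets to sequences, and invokes the classical theorem that a pointwise sequential limit of continuous maps into a separable metric space is Baire class $1$. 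All of these steps are sound, but the machinery is unnecessary for the lemma as stated: the step you flag as the ``main obstacle'' (nets to sequences) simply does not arise in the paper's approach. On the other hand, your argument buys something extra for free: metrizability of $E(X_\alpha,G_\alpha)$ shows the system is not merely tame but $\tame_1$, and in fact hereditarily nonsensitive --- which is precisely the stronger property the paper mentions (via \cite[Corollary 10.2]{GMHNSsystems}) just before giving its short proof of tameness. So your approach recovers that stronger remark, at the cost of obscuring how trivially the literal statement follows from countability.
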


As mentioned above the spaces $X_\alpha$ are countable so the dynamical system $(X_\alpha,G_\alpha)$ is hereditarily nonsensitive, which is strictly stronger than being tame. We give a short proof of tameness below for completeness.

\begin{proof}
    Since $X_\alpha$ is countable, any $f\in X_\alpha^{X_\alpha}$ has the property that $f^{-1}(A)$ is $\mathbf{\Sigma}^0_2$ whenever $A$ is open. In other words every such $f$ and in particular every $f\in E(X_\alpha,G_\alpha)$ is Baire class 1. By \cite[Theorem 6.3]{GlasnerMegrelishviliUspenskij} this is equivalent to the tameness of $(X_\alpha,G_\alpha)$.
\end{proof}

\begin{prop}\label{prop: rank is at most alpha}
    Fix $\alpha<\omega_1$ and let $(X_\alpha,G_\alpha)$ be as above. We have $\beta(X_\alpha,G_\alpha)\leq|X_\alpha|_{CB}=\alpha$.
\end{prop}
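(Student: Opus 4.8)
The plan is to show that for every $f\in E(X_\alpha,G_\alpha)$ and every $\varepsilon>0$ the oscillation derivative is dominated by the Cantor--Bendixson derivative, in the precise sense that $(X_\alpha)^\beta_{\varepsilon,f}\subseteq (X_\alpha)^\beta$ for all ordinals $\beta$, where the left-hand side denotes the $\beta$-th iterate of the oscillation derivative $A\mapsto A^1_{\varepsilon,f}$ and the right-hand side the $\beta$-th Cantor--Bendixson derivative. Since $X_\alpha$ is countable and compact, hence scattered, and $|X_\alpha|_{CB}=\alpha$, we have $(X_\alpha)^\alpha=\varnothing$; combined with the domination this gives $(X_\alpha)^\alpha_{\varepsilon,f}=\varnothing$, so $\beta(f,\varepsilon)\leq\alpha$ for every $\varepsilon>0$, whence $\beta(f)\leq\alpha$, and taking the supremum over $f\in E(X_\alpha,G_\alpha)$ yields $\beta(X_\alpha,G_\alpha)\leq\alpha$.

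The crucial observation driving the whole argument is that $A^1_{\varepsilon,f}\subseteq A^1$ for every subset $A\subseteq X_\alpha$, where $A^1$ denotes the set of non-isolated points of $A$. Indeed, if $x$ is isolated in $A$, choose an open neighbourhood $V$ of $x$ with $V\cap A=\{x\}$; then $\diam(f(V\cap A))=0$, so $\osc(f,x,A)=0<\varepsilon$ and $x\notin A^1_{\varepsilon,f}$. Thus the oscillation derivative can only delete points that are already non-isolated, i.e.\ points that the Cantor--Bendixson derivative deletes as well. Notice that this uses nothing about $f$ beyond its being a function, which reflects the fact that the upper bound is really a feature of $X_\alpha$ being scattered rather than of the dynamics.

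First I would establish the domination $(X_\alpha)^\beta_{\varepsilon,f}\subseteq(X_\alpha)^\beta$ by transfinite induction on $\beta$. The base case $\beta=0$ is the equality $X_\alpha=X_\alpha$. At a successor stage, apply the crucial observation with $A=(X_\alpha)^\beta_{\varepsilon,f}$ to get $(X_\alpha)^{\beta+1}_{\varepsilon,f}\subseteq\bigl((X_\alpha)^\beta_{\varepsilon,f}\bigr)^1$, and then combine the inductive hypothesis with the monotonicity of the Cantor--Bendixson derivative --- namely $S\subseteq T$ implies $S^1\subseteq T^1$, since any neighbourhood of a non-isolated point of $S$ meets $S\setminus\{x\}\subseteq T\setminus\{x\}$ --- to conclude $\bigl((X_\alpha)^\beta_{\varepsilon,f}\bigr)^1\subseteq\bigl((X_\alpha)^\beta\bigr)^1=(X_\alpha)^{\beta+1}$. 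At a limit stage $\gamma$ the inclusion follows by intersecting: $(X_\alpha)^\gamma_{\varepsilon,f}=\bigcap_{\beta<\gamma}(X_\alpha)^\beta_{\varepsilon,f}\subseteq\bigcap_{\beta<\gamma}(X_\alpha)^\beta=(X_\alpha)^\gamma$.

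The only point requiring a little care is the final identity $(X_\alpha)^\alpha=\varnothing$, for which I would invoke that a nonempty closed subset of a Polish space without isolated points is uncountable; hence the stabilized Cantor--Bendixson derivative of the countable compact space $X_\alpha$ is empty, and by the definition of $|X_\alpha|_{CB}$ this occurs exactly at $\alpha$. There is no serious obstacle here: the entire content is the recognition, in the crucial observation, that the oscillation derivative removes only non-isolated points and is therefore pointwise slower than the Cantor--Bendixson derivative.
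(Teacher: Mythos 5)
Your proposal is correct and follows essentially the same route as the paper: a transfinite induction showing $(X_\alpha)^\beta_{\varepsilon,f}\subseteq(X_\alpha)^\beta$, driven by the observation that isolated points of $A$ have zero oscillation and hence survive no oscillation derivative, followed by the remark that the stabilized Cantor--Bendixson derivative of a countable compact space is empty. Your explicit appeal to monotonicity of the Cantor--Bendixson derivative at the successor step is a detail the paper leaves implicit, but the argument is the same.
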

\begin{proof}
    Fix $f\in E(X_\alpha,G_\alpha)$ and $\varepsilon>0$. We show by induction that $(X_\alpha)^\beta_{\varepsilon,f}\subseteq(X_\alpha)^\beta$ for all $\beta\leq\alpha$. Since the points of $X_\alpha\setminus(X_\alpha)^1$ are isolated in $X_\alpha$, there is an inclusion $(X_\alpha)^1_{\varepsilon,f}\subseteq(X_\alpha)^1$. The same observation, together with the inductive hypothesis $(X_\alpha)^\beta_{\varepsilon,f}\subseteq(X_\alpha)^\beta$, shows that $(X_\alpha)^{\beta+1}_{\varepsilon,f}\subseteq(X_\alpha)^{\beta+1}$. If instead $\gamma$ is a limit ordinal, the inclusion follows immediately since both $(X_\alpha)^\gamma$ and $(X_\alpha)^\gamma_{\varepsilon,f}$ are defined as the intersection of the sets of lower index in the respective hierarchies. We thus have $(X_\alpha)^\beta_{\varepsilon,f}\subseteq(X_\alpha)^\beta$ for all $\beta\leq\alpha$. It's clear from the definition of the Cantor-Bendixson rank that $X^{|X|_{CB}}$ is perfect, and since $X_\alpha$ is countable this implies that $(X_\alpha)^\alpha=\varnothing$. By the inclusion above we obtain $(X_\alpha)^\alpha_{\varepsilon,f}=\varnothing$, or in other words $\beta(f,\varepsilon)\leq\alpha$. Since $\varepsilon$ was arbitrary we have $\beta(f)\leq\alpha$, and since $f$ was arbitrary we also obtain $\beta(X_\alpha,G_\alpha)\leq\alpha$.
\end{proof}

Recall that every ordinal $\alpha$ can be written in the form $\alpha=\beta+n$, where $\beta$ is a limit ordinal and $n$ is a finite ordinal (where $n=0$ is possible). The \emph{parity of $\alpha$} is defined to be the parity of $n$ in the decomposition above.

\begin{prop}\label{prop: rank is at least alpha}
    Fix $\alpha<\omega_1$ and let $(X_\alpha,G_\alpha)$ be as above. We have $\beta(X_\alpha,G_\alpha)\geq\alpha$.
\end{prop}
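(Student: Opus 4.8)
The plan is to exhibit a single $f\in E(X_\alpha,G_\alpha)$ together with the single scale $\varepsilon=1$ for which $\beta(f,1)=\alpha$; since $\beta(X_\alpha,G_\alpha)\geq\beta(f)\geq\beta(f,1)$ this suffices, and combined with Proposition \ref{prop: rank is at most alpha} it in fact yields the exact value $\beta(X_\alpha,G_\alpha)=\alpha$. For $y\in Y_\alpha$ let $\rho(y)$ denote its Cantor--Bendixson rank in $Y_\alpha$, and define $h\colon Y_\alpha\to\{0,1\}$ by $h(y)=0$ if $\rho(y)$ has even parity and $h(y)=1$ otherwise (in the sense of parity defined above). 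Let $f\colon X_\alpha\to X_\alpha$ be the involution that swaps the two copies $(y,0),(y,1)$ exactly when $h(y)=1$: that is, $f(y,i)=(y,i)$ if $h(y)=0$ and $f(y,i)=(y,1-i)$ if $h(y)=1$. The guiding idea is that $h$ is the classical parity-of-rank function, whose oscillation rank equals $|Y_\alpha|_{CB}$, and that $f$ transports this oscillation into the dynamics by turning ``$h$ changes value'' into ``the two layers, at distance $1$, get mixed''.

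First I would check that $f\in E(X_\alpha,G_\alpha)$, i.e.\ that $f$ lies in the pointwise closure of $G_\alpha$. Given finitely many points $(y_1,i_1),\dots,(y_k,i_k)$ it is enough to produce a homeomorphism agreeing with $f$ on all of them. Since $Y_\alpha$ is a countable compact metric space it is zero-dimensional, so the disjoint finite sets $\{y_j:h(y_j)=1\}$ and $\{y_j:h(y_j)=0\}$ can be separated by a clopen $S\subseteq Y_\alpha$ containing the former and missing the latter. The map $g$ that swaps the two layers over the clopen set $S\times\{0,1\}$ and is the identity elsewhere is a homeomorphism of $X_\alpha$, and $g(y_j,i_j)=f(y_j,i_j)$ for every $j$. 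Hence $f\in E(X_\alpha,G_\alpha)$.

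The heart of the argument is to prove $(X_\alpha)^\gamma_{1,f}=(X_\alpha)^\gamma$ for every $\gamma\leq\alpha$ by transfinite induction on $\gamma$, where I also use $(X_\alpha)^\gamma=(Y_\alpha)^\gamma\times\{0,1\}$. The inclusion $\subseteq$ is precisely the $\varepsilon=1$ case of the computation in the proof of Proposition \ref{prop: rank is at most alpha}. For the reverse inclusion, observe that because the two layers lie at distance $1$ while each layer is locally of small diameter, for a point $(y,i)$ and the closed set $A=(Y_\alpha)^\gamma\times\{0,1\}$ one has $\osc(f,(y,i),A)\geq 1$ as soon as every neighbourhood of $y$ contains some $y'\in(Y_\alpha)^\gamma$ with $h(y')\neq h(y)$: indeed $(y,i)$ and the nearby $(y',i)$ are then sent by $f$ to opposite layers, so every small image has $\diam\geq 1$. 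Thus everything reduces to the following claim about $h$: \emph{for every $y$ with $\rho(y)=\delta$ and every $\gamma<\delta$, every neighbourhood of $y$ contains some $y'\in(Y_\alpha)^\gamma$ with $h(y')\neq h(y)$.}

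I would prove this claim by induction on $\delta$, using repeatedly that consecutive Cantor--Bendixson ranks have opposite parity. If $\delta=\mu+1$ is a successor, then (since $y\notin(Y_\alpha)^{\mu+2}$) the point $y$ is a limit of points of rank exactly $\mu$; these lie in $(Y_\alpha)^\gamma$ because $\mu\geq\gamma$, and have parity opposite to $h(y)$, settling all $\gamma<\delta$. If $\delta$ is a limit, fix $\gamma<\delta$; since $\gamma+2\leq\delta$, the point $y$ is a limit of points $z\in(Y_\alpha)^{\gamma+1}$ with $\gamma+1\leq\rho(z)<\delta$. If some such $z$ near $y$ already has $h(z)\neq h(y)$ we are done; otherwise $h(z)=h(y)$, and applying the inductive hypothesis to $z$ (whose rank is $<\delta$) with the same $\gamma<\rho(z)$, inside any neighbourhood of $z$ sitting within the given neighbourhood of $y$ we find $z'\in(Y_\alpha)^\gamma$ with $h(z')\neq h(z)=h(y)$. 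This limit case is the step I expect to be the main obstacle, precisely because at a point of limit rank there is no immediate predecessor rank to appeal to, and one must instead bootstrap through a nearby lower-rank point. Granting the claim, the successor step of the main induction follows from the oscillation criterion above, the limit step is immediate as both hierarchies take intersections at limits, and the base case $\gamma=0$ is trivial. Finally, since $X_\alpha$ is countable with $|X_\alpha|_{CB}=\alpha$ we get $(X_\alpha)^\gamma\neq\varnothing$ for $\gamma<\alpha$ and $(X_\alpha)^\alpha=\varnothing$, whence $\beta(f,1)=\alpha$ and therefore $\beta(X_\alpha,G_\alpha)\geq\alpha$.
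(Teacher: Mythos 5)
Your proof is correct and follows essentially the same route as the paper: the same parity-of-Cantor--Bendixson-rank involution $f$, the same clopen-set argument showing $f\in E(X_\alpha,G_\alpha)$, and the same transfinite induction identifying the oscillation derivatives $(X_\alpha)^\gamma_{\varepsilon,f}$ with the Cantor--Bendixson derivatives $(X_\alpha)^\gamma$. Your uniform formulation of the density claim (for all $\gamma<\rho(y)$ simultaneously, with the bootstrap through a nearby point of intermediate rank at limit $\rho(y)$) is a slightly cleaner packaging of the paper's ``approximants of smaller rank and opposite parity'' lemma, but the underlying argument is identical.
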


\begin{proof}
    It suffices to construct an $f\in E(X_\alpha,G_\alpha)$ with $\beta(f)\geq\alpha$. For any $x\in X_\alpha$, let $\rk(x)$ be the least ordinal $\beta\leq\alpha$ such that $x\in(X_\alpha)^\beta$ but $x\not\in(X_\alpha)^{\beta+1}$. Define $f$ by $$f((y,i))=\begin{cases}
        (y,i) & \text{if $\rk(y)$ is even} \\
        (y,1-i) & \text{if $\rk(y)$ is odd.}
    \end{cases}$$
For any $0<\varepsilon<1$ we now have $(X_\alpha)^1_{\varepsilon,f}=(X_\alpha)^1$. We have the $\subseteq$ inclusion as in the proof of Proposition \ref{prop: rank is at most alpha}. For the reverse inclusion we prove that for every $x\in(X_\alpha)^1$, we can find a sequence $(x_i)_{i<\omega}$ of distinct elements of $X_\alpha$ such that $x_i\to x$, $\rk(x_i)<\rk(x)$, and $\rk(x_i)$ and $\rk(x)$ have opposite parity for all $i$. We proceed by induction on $\rk(x)<\alpha$.
\begin{itemize}
    \item If $\rk(x)=1$, then $x\in(X_\alpha)^1$ but $x\not\in(X_\alpha)^2$, meaning that $x$ is isolated in $(X_\alpha)^1$ but not in $X_\alpha$. This implies that there exists a sequence $(x_i)_{i<\omega}\subseteq X_\alpha\setminus(X_\alpha)^1$ with $x_i\to x$. Since $\rk(x_i)=0$ for every $i$, the base case is done.
    \item If $\rk(x)=\beta+1$ is a successor ordinal, the same argument as in the previous case gives a sequence $(x_i)_{i<\omega}\subseteq (X_\alpha)^\beta\setminus(X_\alpha)^{\beta+1}$ with $x_i\to x$. Since $\rk(x_i)=\beta$, while $\rk(x_i)=\beta+1$, this establishes the successor case.
    \item If instead $\rk(x)=\gamma$ is a limit ordinal, then there must be a sequence $(x_i)_{i<\omega}$ with $\rk(x_i)<\gamma$ for every $i$ with $x_i\to x$. If $\rk(x_i)$ and $\rk(x)$ have the same parity, we can simply replace $x_i$ with a good enough approximation of the opposite rank, which exists by inductive hypothesis.
\end{itemize}
 Now given any open neighbourhood $U$ of $x$ there is $x_i\in U$ with $d(f(x),f(x_i))\geq\varepsilon$, showing that the oscillation of $f$ at $x$ is at least $\varepsilon$. This shows $(X_\alpha)^1\subseteq(X_\alpha)^1_{\varepsilon,f}$.
 We now prove by induction that $(X_\alpha)^\beta_{\varepsilon,f}=(X_\alpha)^\beta$ for all $\beta<\alpha$.

 At a successor ordinal $\beta+1$ we can find, for every $x\in(X_\alpha)^{\beta+1}=\left((X_\alpha)^\beta\right)^1$, a sequence $x_i\in(X_\alpha)^\beta$ with $x_i\to x$, as we have shown in the successor case of the previous argument. This shows that $x\in\left((X_\alpha)^\beta\right)^1_{\varepsilon,f}$, which by inductive hypothesis is the same as $$x\in\left((X_\alpha)^\beta_{\varepsilon,f}\right)^1_{\varepsilon,f}=(X_\alpha)^{\beta+1}_{\varepsilon,f},$$ showing $(X_\alpha)^{\beta+1}_{\varepsilon,f}\supseteq(X_\alpha)^{\beta+1}$. The other inclusion follows as in Proposition \ref{prop: rank is at most alpha}.
 
 At a limit ordinal $\gamma$ the result follows immediately from the fact that both $(X_\alpha)^\gamma$ and $(X_\alpha)^\gamma_{\varepsilon,f}$ are defined as the intersection of the sets of lower ranks in the respective hierarchies.
 
We have thus obtained $(X_\alpha)^\beta_{\varepsilon,f}=(X_\alpha)^\beta$ for all $\beta<\alpha$. Since $(X_\alpha)^\beta\neq\varnothing$, we also have $(X_\alpha)^\beta_{\varepsilon,f}\neq\varnothing$, showing that $\beta(f)\geq\alpha$. By Proposition \ref{prop: rank is at most alpha}, this means that $\beta(f)=\alpha$.

It remains to verify that $f\in E(X_\alpha,G_\alpha)$. In other words given finitely many $(x_1,i_1),\ldots,(x_n,i_n)\in X_\alpha$ for some $n<\omega$ and $i_j\in\{0,1\}$, we want to find $g\in G_\alpha$ such that $g(x_i)=f(x_i)$ for all $i$. Since $Y_\alpha$ is zero-dimensional we can find pairwise disjoint clopen neighbourhoods $U_j$ of $x_j$, so that $U_j\times\{i_j\}$ are clopen neighbourhoods of $(x_j,i_j)$ with pairwise disjoint projections. Thanks to the latter property the map $g$ defined by mapping $U_j\times\{i_j\}$ to $U_j\times\{1-i_j\}$ whenever $f((x_j,i_j))=(x_j,1-i_j)$ and by the identity otherwise, is an element of $G_\alpha$ in the given neighbourhood of $f$, showing that $f\in E(X_\alpha,G_\alpha)$.
\end{proof}

Combining the two previous propositions we immediately obtain

\begin{thm}
    Let $\alpha<\omega_1$ and $(X_\alpha,G_\alpha)$ as above. Then $\beta(X_\alpha,G_\alpha)=\alpha$.
\end{thm}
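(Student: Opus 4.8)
The plan is to combine the two preceding propositions, which between them sandwich the $\beta$-rank from both sides. Proposition \ref{prop: rank is at most alpha} already gives $\beta(X_\alpha,G_\alpha)\leq\alpha$, and Proposition \ref{prop: rank is at least alpha} gives $\beta(X_\alpha,G_\alpha)\geq\alpha$. Since $\beta(X_\alpha,G_\alpha)$ is a single ordinal satisfying both inequalities, it must equal $\alpha$, and there is nothing further to prove.

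**The only subtlety** worth spelling out is that the two propositions are phrased for the whole system $(X_\alpha,G_\alpha)$, so no additional manipulation of suprema over $p\in E(X_\alpha,G_\alpha)$ or over $\varepsilon>0$ is needed: the upper bound is already stated at the level of the system, and the lower bound was obtained by exhibiting a single witness $f\in E(X_\alpha,G_\alpha)$ with $\beta(f)\geq\alpha$, which forces $\beta(X_\alpha,G_\alpha)=\sup\{\beta(p)\mid p\in E(X_\alpha,G_\alpha)\}\geq\alpha$. Thus the statement follows by directly chaining the two inequalities.

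**There is no real obstacle here**, since all the work has been front-loaded into Propositions \ref{prop: rank is at most alpha} and \ref{prop: rank is at least alpha}. The genuinely hard part was already dispatched earlier: in the lower-bound proposition one had to verify that the single map $f$ flipping the second coordinate according to the parity of the Cantor--Bendixson rank both lies in $E(X_\alpha,G_\alpha)$ and has its oscillation-rank derivatives coincide with the Cantor--Bendixson derivatives of $X_\alpha$. Granting those two propositions, the proof of the theorem is a one-line consequence.

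\begin{proof}
    By Proposition \ref{prop: rank is at most alpha} we have $\beta(X_\alpha,G_\alpha)\leq\alpha$, while Proposition \ref{prop: rank is at least alpha} gives $\beta(X_\alpha,G_\alpha)\geq\alpha$. Combining these two inequalities yields $\beta(X_\alpha,G_\alpha)=\alpha$.
\end{proof}
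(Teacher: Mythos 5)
Your proof is correct and is exactly the paper's argument: the theorem is stated immediately after the phrase ``Combining the two previous propositions we immediately obtain,'' so the intended proof is precisely the chaining of Proposition \ref{prop: rank is at most alpha} and Proposition \ref{prop: rank is at least alpha} that you give. Your remarks on why no further manipulation of suprema is needed are accurate but optional.
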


\begin{remark}
A simple modification of the examples above can be used to produce, for every $\alpha<\omega_1$ a tame, metric system $(Z_\alpha,H_\alpha)$ with $Z_\alpha$ connected and $\beta(Z_\alpha,H_\alpha)=\alpha$. Indeed fix $\alpha<\omega_1$ and let $(X_\alpha,G_\alpha)$ be as above. Let $Z_\alpha=C(X_\alpha)$ be the cone over $X_\alpha$, that is the quotient space of $X_\alpha\times[0,1]$ by the relation identifying $(x,t)$ and $(x',t')$ if and only if $t=t'=1$. Clearly $Z_\alpha$ is (path) connected. Consider $H_\alpha=G_\alpha\times\{\Id_{[0,1]}\}$ as a subgroup of $\Homeo(X_\alpha\times[0,1])$. It is easy to check that $E(X_\alpha\times[0,1],H_\alpha)\cong E(X_\alpha,G_\alpha)$, so that the system $(X_\alpha\times[0,1],H_\alpha)$ is tame. The quotient is a factor map $(X_\alpha\times[0,1],H_\alpha)\to(Z_\alpha,H_\alpha)$, so the latter system is also tame, being a factor of a tame system \cite[Proposition 6.4]{KerrLi}. The same arguments as in the proof of Proposition \ref{prop: rank is at most alpha} and Proposition \ref{prop: rank is at least alpha} now show that $\beta(Z_\alpha,H_\alpha)=\alpha$.
\end{remark}

\section{Some open questions}
\label{section: open questions and conclusion}

Based on the examples of Section \ref{section: Wazewski dendrites have actions that are not tame_1} and \ref{section: the second example} we ask the following question.

\begin{question}
    Let $X$ be a dendrite such that the action $\Homeo(X)\curvearrowright X$ is minimal. Can this action be $\tame_1$? More generally is there a characterization of the dendrites $X$ for which the action $\Homeo(X)\curvearrowright X$ is $\tame_1$, or at least a natural class of dendrites, such that any action on a dendrite in this class is $\tame_1$?
\end{question}

While investigating this question, with the goal of using Proposition \ref{prop: Ellis} and an argument similar to those in Section \ref{section: Wazewski dendrites have actions that are not tame_1} and \ref{section: the second example} to show that such an action is not $\tame_1$, we wondered about two related questions, namely:
\begin{itemize}
    \item Let $X$ be a dendrite such that the action $\Homeo(X)\curvearrowright X$ is minimal. Must the induced diagonal action $\Homeo(X)\curvearrowright X^2\setminus\Delta_X$ also be minimal?
    \item Let $X$ be a dendrite such that the action $\Homeo(X)\curvearrowright X$ is minimal. Let $e,f\in\End(X)$ and let $\mathrm{Stab}_{e,f}\subseteq\Homeo(X)$ be the pointwise stabilizer of $\{e,f\}$. Must the action $\mathrm{Stab}_{e,f}\curvearrowright [e,f]$ have a dense orbit?
\end{itemize}

While the answer is positive for the action of $\Homeo(W_P)$ on $W_P$, the dendrite $X$ constructed in Section \ref{section: the second example} provides a counterexample to both questions. Let $e,f\in\End(X)$ be endpoints such that $e$ is green, $f$ is red, and the arc $[e,f]$ only alternates between green and red once. Then the action $\mathrm{Stab}_{e,f}\curvearrowright[e,f]$ has no dense orbit. Indeed the orbit of no point in the green part of the arc can approximate $f$, while the orbit of no point in the red part of the arc can approximate $e$. It is also easy to check that the orbit of $(e,f)$ under the diagonal action cannot approximate a pair of alternating endpoints, answering negatively the first part of the question as well.

Based on the examples constructed in Section \ref{section: systems with arbitrary beta-rank} the following question is natural.

\begin{question}
    Are there examples, for every $\alpha<\omega_1$, of tame, metric, minimal systems $(M_\alpha,G_\alpha)$ with $\beta(M_\alpha,G_\alpha)=\alpha$? Are there such examples where the action is on a connected space?
\end{question}

\section*{Ackowledgements}
We would like to thank Aleksandra Kwiatkowska for many useful conversations about dendrites and proofreading an early version of this paper. We would also like to thank Eli Glasner for a nice conversation about tame dynamical systems and their $\beta$-rank at the 13th Prague symposium in topology, as well as Michael Megrilishvili for reading an early version of this paper and pointing out both the easier proof of Lemma \ref{lemma: elements of E(X,G) preserve betweenness} and the fact that the dynamical systems constructed in Section \ref{section: systems with arbitrary beta-rank} are not only tame, but also hereditarily nonsensitive. We would like to thanks the anonymous referee for their comments, which improved the exposition in the paper.

\end{document}